\numberwithin{equation}{section}
  \theoremstyle{definition}
  \newtheorem{defn}{\protect\definitionname}[section]
  \theoremstyle{plain}
  \newtheorem{thm}{\protect\theoremname}[section]
  \theoremstyle{remark}
  \newtheorem{rem}{\protect\remarkname}[section]
  \theoremstyle{plain}
  \newtheorem{prop}{\protect\propositionname}[section]
  \theoremstyle{plain}
  \newtheorem{lem}{\protect\lemmaname}[section]
  \providecommand{\definitionname}{Definition}
  \providecommand{\lemmaname}{Lemma}
  \providecommand{\propositionname}{Proposition}
  \providecommand{\remarkname}{Remark}
\providecommand{\theoremname}{Theorem}
\begin{document}

\title{The Uniqueness of Signature Problem in the Non-Markov Setting}

\author{H. Boedihardjo%
\thanks{The Oxford-Man Institute, University of Oxford, Eagle House, Walton
Well Road, Oxford OX2 6ED. Email: horatio.boedihardjo@oxford-man.ox.ac.uk %
} and X. Geng%
\thanks{Mathematical Institute, University of Oxford, Woodstock Road, Oxford
OX2 6GG and the Oxford-Man Institute, University of Oxford, Eagle
House, Walton Well Road, Oxford OX2 6ED. Email: xi.geng@maths.ox.ac.uk %
}}
\maketitle
\begin{abstract}
The goal of this paper is to simplify and strengthen the Le Jan-Qian
approximation scheme of studying the uniqueness of signature problem
to the non-Markov setting. We establish a general framework for a
class of multidimensional stochastic processes over $[0,1]$ under
which with probability one, the signature (the collection of iterated
path integrals in the sense of rough paths) is well-defined and determines
the sample paths of the process up to reparametrization. In particular,
by using the Malliavin calculus we show that our method applies to
a class of Gaussian processes including fractional Brownian motion
with Hurst parameter $H>1/4$, the Ornstein-Uhlenbeck process and
the Brownian bridge.
\end{abstract}

\section{Introduction}

The set of continuous paths forms a semigroup with involution, with
the group operation and involution given by the concatenation and
reversal of paths. In as early as 1954, K.T. Chen \cite{Chen54} observed
that the map sending a bounded variation path $x:\left[0,1\right]\rightarrow\mathbb{R}^{d}$
to the formal series 
\[
1+\int_{0}^{1}dx_{s}^{i}X_{i}+\int_{0}^{1}\int_{0}^{s_{2}}dx_{s_{1}}^{i}dx_{s_{2}}^{j}X_{i}X_{j}+\ldots,
\]
where $X_{i}$ ($i=1,\cdots,d$) are indeterminates and $x^{i}$
denote the $i$-th coordinate component of $x$, is a homomorphism
from the semigroup of paths to the algebra of non-commutative formal
power series. Unfortunately, this map is not injective. The homomorphism
property of the map implies that any path concatenated with its reversal
will be mapped to the trivial formal series. It seems however that
the map is essentially injective if we restrict our attention to paths
that ``do not track back along itself''. Indeed, Chen himself \cite{Chen uniqueness}
proved that the map is injective on the space of regular, irreducible
paths. In \cite{tree like}, B. Hambly and T. Lyons extended Chen's
result to the space of paths with bounded variation and introduced
the notion of \textit{tree-like paths} to describe paths that track
back along itself. In particular, they proved that the formal series
corresponding to a path, which they called the signature of a path,
is trivial if and only if the path is tree-like. 

Aside from its interesting algebraic properties, the map also gains
attention through the fundamental role it plays in rough path theory.
In \cite{Young}, L.C. Young defined the Stieltjes type integral $\int_{0}^{1}y_{t}dx_{t}$
in terms of a Riemann sum when $x_{\cdot}$ and $y_{\cdot}$ have
finite $p$ and $q$-variation respectively, where $\frac{1}{p}+\frac{1}{q}>1$.
In particular, it allows us to define, for a Lipschitz one form $\phi$,
the integral $\int_{0}^{1}\phi\left(x_{t}\right)dx_{t}$ when $x$
is a multidimensional path with finite $p$-variation for $p<2$.
In the same paper, Young gave an example where the integral $\int_{0}^{1}\phi\left(x_{t}\right)dx_{t}$
defined using Riemann sum would diverge when $x$ has only finite
$2$-variation. In other words, the Stieltjes integration map $x\rightarrow\int_{0}^{1}\phi\left(x_{t}\right)dx_{t}$
does not have a closable graph with respect to $p$-variation if $p\geqslant2$.
The seemingly insurmountable $p=2$ barrier, at least in the deterministic
setting, is to remain for another sixty years. In \cite{MR1654527},
T. Lyons showed that the Stieltjes integration map would have a closable
graph with respect to the $p$-variation metric if the path $x$ takes
value in a step-$\left\lfloor p\right\rfloor $ nilpotent Lie group.
He called these paths \textit{weakly geometric $p$-rough paths}.
The first step in the construction of such integral is to define the
signature for weakly geometric $p$-rough paths. The integration of
one forms against such paths is then defined via polynomial approximations.
There has been excellent progress in extending the rough path theory
to even more general paths in, for example, the work of M. Gubinelli
\cite{Gub10}, M. Hairer and D. Kelly \cite{HK13}, etc.

From a theoretical standpoint, once Lyons defined the signature for
weakly geometric rough paths, it is natural to ask if the signature
of a weakly geometric rough path determines the path uniquely up to
tree-like equivalence as in the bounded variation case. From a practical
point of view, there has also been works done in, for example, D.
Levin, T. Lyons and H. Ni \cite{LLN13} on analyzing time series data
using the signature map. The justification of their method implicitly
used the fact that the map from a path to its signature is injective
in some sense. The solution of this long standing open problem in
rough path theory is contained in the very recent work by H. Boedihardjo,
X. Geng, T. Lyons and D. Yang \cite{BGLY14}. 

There has also been exciting progress of the problem in the probabilistic
setting. In \cite{LQ12}, Y. Le Jan and Z. Qian proved that with probability
one, the Stratonovich signatures of Brownian motion determine the
Brownian sample paths. Their strategy, in particular the approximation
scheme constructed in the proof, was originated from the study of
cyclic cohomology in algebraic topology. The proof relies heavily
on the strong Markov property and the potential theory for the Laplace
operator. This result was then extended to hypoelliptic diffusions
by X. Geng and Z. Qian \cite{GQ13}. Similar results were also established
for Chordal SLE$_{\kappa}$ curves with $\kappa\leqslant4$ by H.
Boedihardjo, H. Ni and Z. Qian \cite{BNQ12}. 

It should be pointed out that in the probabilistic setting, the result
of Le Jan and Qian is stronger than the general deterministic result
in \cite{BGLY14}, as it not only gives the injectivity but also gives
an explicit way of how the sample path can be reconstructed from its signature
outside a null set in the path space. In the deterministic setting,
such reconstruction was studied by T. Lyons and W. Xu \cite{LX14}
for $C^{1}$-paths via symmetrization, and by H. Boedihardjo and X.
Geng \cite{BG13} for planar Jordan curves with finite $p$-variation
for $1\leqslant p<2$ via Fourier transform. A general inversion scheme
for the signature of a weakly geometric rough path remains a significant
open problem in rough path theory.

The main purpose of this paper is to simplify and strengthen the method
of Le Jan and Qian to include a class of non-Markov processes. In
particular, we shall establish the almost-sure uniqueness of signature
(up to reparametrization) for a class of Gaussian processes including
fractional Brownian motion with Hurst parameter $H>1/4$, the Ornstein-Uhlenbeck
process and the Brownian bridge. More importantly, our technique also
yields an explicit inversion scheme for the signature of sample paths.The
fundamental difficulty in exploiting the idea of Le Jan and Qian lies
in the unavailability of those probabilistic and analytic tools arising
from the strong Markov property and the potential theory which were
used in their proof. The key of getting around this difficulty is to understand 
the pathwise nature of the problem and to find methods to analyze pathwisely based
on techniques from rough path theory. In the fundamental example of Gaussian processes, 
the key idea is to make use of the structure of the Cameron-Martin space and to
apply local regularity results for Gaussian functionals from the Malliavin calculus.

The well-definedness of signature when the sample paths of the process
have finite $p$-variation for $p\geqslant1$ are well studied in
probability literatures. For instance, it was shown by L. Coutin and
Z. Qian \cite{CQ00} that with probability one, the sample paths of
fractional Brownian motion with Hurst parameter $H>\frac{1}{4}$ can
be lifted canonically as geometric rough paths. Moreover, it is believed
that no such canonical lift exists for $H\leqslant\frac{1}{4}$. There
are similar results for lots of interesting stochastic processes,
such as martingales, Markov processes, Gaussian processes, solutions
to Gaussian rough differential equations, SLE$_{\kappa}$ curves with
$\kappa\leqslant4$ etc., under certain regularity conditions. See for
example \cite{FV10}.

In establishing our main result, we shall state explicitly under what
conditions on the law of the process would the almost-sure uniqueness
of signature hold. We hope that this provides a general framework
for solving the almost-sure uniqueness of signature problem for other
interesting processes. Note that our result is \textit{not }a direct
corollary of the result in \cite{BGLY14}, since it is highly nontrivial to prove
the existence of a null set outside which no two paths can be tree-like deformation of each other.

\section{Preliminaries on Rough Path Theory}

We first recall some basic notions from rough path theory, which we
will use throughout the rest of this paper. 

Let $T\left(\left(\mathbb{R}^{d}\right)\right)$ denote the infinite
dimensional tensor algebra over $\mathbb{R}^{d}$. Let $\pi_{k}$
denote the projection map from $T\left(\left(\mathbb{R}^{d}\right)\right)$
to $\left(\mathbb{R}^{d}\right)^{\otimes k}$ and $\pi^{\left(k\right)}$
denote the projection map from $T\left(\left(\mathbb{R}^{d}\right)\right)$
to the truncated $k$-th tensor algebra 
\[
T^{k}\left(\mathbb{R}^{d}\right):=\oplus_{j=0}^{k}\left(\mathbb{R}^{d}\right)^{\otimes j}.
\]
Here we shall equip $\left(\mathbb{R}^{d}\right)^{\otimes k}$ with
the Euclidean norm by identifying it with $\mathbb{R}^{d^{k}}$. Let
$\triangle:=\left\{ \left(s,t\right):0\leqslant s\leqslant t\leqslant1\right\} $
be the standard $2$-simplex.
\begin{defn}
A \textit{multiplicative functional of degree} $N\in\mathbb{N}$ is
a map $\mathbf{X}:\triangle\rightarrow T^{N}\left(\mathbb{R}^{d}\right)$
satisfying the following so-called Chen's identity: 
\[
\mathbf{X}_{s,u}\otimes\mathbf{X}_{u,t}=\mathbf{X}_{s,t},\ \forall0\leqslant s\leqslant u\leqslant t\leqslant1.
\]
Let $\mathbb{\mathbf{X}},\mathbf{Y}$ be two multiplicative functionals
of degree $N.$ For $p\geqslant1$, define 
\[
d_{p}\left(\mathbf{X},\mathbf{Y}\right)=\max_{1\leqslant i\leqslant N}\sup_{\mathcal{P}}\left(\sum_{l}\left|\pi_{i}\left(\mathbf{X}_{t_{l-1},t_{l}}-\mathbf{Y}_{t_{l-1},t_{l}}\right)\right|^{\frac{p}{i}}\right)^{\frac{i}{p}},
\]
where the supremum is taken over all possible finite partitions of
$[0,1]$. $d_{p}$ is called the \textit{p-variation metric}. If $d_{p}\left(\mathbf{X},\mathbf{1}\right)<\infty$
where $\mathbf{1}=(1,0,\cdots,0)$, we then say that $\mathbf{X}$
has \textit{finite p-variati}on. A multiplicative functional of degree
$\lfloor p\rfloor$ with finite $p$-variation is called a \textit{p-rough
path}.
\end{defn}

The following so-called Lyons' extension theorem (see \cite{MR1654527})
says that the signature of a $p$-rough path is well defined. 
\begin{thm}
\label{Lyons' Extension Theorem}For $p\geqslant1$, let $\mathbf{X}$
be a $p$-rough path. Then there exists a unique multiplicative functional
$S\left(\mathbf{X}\right):\triangle\rightarrow T\left(\left(\mathbb{R}^{d}\right)\right)$
such that $\pi^{\left(N\right)}\left(S\left(\mathbf{X}\right)\right)$
has finite $p$-variation for each $N\in\mathbb{N}$ and 
\[
\pi^{\left(\left\lfloor p\right\rfloor \right)}\left(S\left(\mathbf{X}\right)\right)=\mathbf{X}.
\]

\end{thm}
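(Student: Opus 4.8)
The plan is to construct the components of $S(\mathbf X)$ of degree higher than $\lfloor p\rfloor$ one level at a time, obtaining each as a limit of multiplicative products over finer and finer partitions, in the spirit of Young's construction of the Stieltjes integral; the analytic engine is the \emph{neo-classical inequality}
\[
\sum_{j=0}^{m}\frac{x^{j/p}\,y^{(m-j)/p}}{\Gamma(j/p+1)\,\Gamma((m-j)/p+1)}\leqslant p^{2}\,\frac{(x+y)^{m/p}}{\Gamma(m/p+1)},\qquad x,y\geqslant0,\ m\geqslant1 .
\]
Since $\mathbf X$ has finite $p$-variation one first extracts a control function $\omega:\triangle\to[0,\infty)$, i.e. a continuous function vanishing on the diagonal with $\omega(s,u)+\omega(u,t)\leqslant\omega(s,t)$, together with a constant $\beta>0$, such that $|\pi_{i}(\mathbf X_{s,t})|\leqslant\omega(s,t)^{i/p}/(\beta\Gamma(i/p+1))$ for $i=1,\dots,\lfloor p\rfloor$. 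I would take $\beta:=p^{2}K_{0}$ with $K_{0}:=1+\sum_{r\geqslant3}(2/(r-1))^{\theta_{0}}$ and $\theta_{0}:=(\lfloor p\rfloor+1)/p>1$; the precise value is chosen so that the induction below closes with one and the same $\beta$ at every level. The statement to be proved by induction on $N\geqslant\lfloor p\rfloor$ is that there is a multiplicative functional $\mathbf X^{(N)}:\triangle\to T^{N}(\mathbb R^{d})$ with $\pi^{(\lfloor p\rfloor)}(\mathbf X^{(N)})=\mathbf X$ and $|\pi_{i}(\mathbf X^{(N)}_{s,t})|\leqslant\omega(s,t)^{i/p}/(\beta\Gamma(i/p+1))$ for every $i\leqslant N$.

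For the inductive step I regard $\mathbf X^{(N)}$ as taking values in $T^{N+1}(\mathbb R^{d})$ with vanishing top component and, for a partition $\mathcal P=\{s=t_{0}<\dots<t_{m}=t\}$ of $[s,t]$, I set
\[
\mathbf X^{N+1,\mathcal P}_{s,t}:=\pi_{N+1}\!\left(\mathbf X^{(N)}_{t_{0},t_{1}}\otimes\cdots\otimes\mathbf X^{(N)}_{t_{m-1},t_{m}}\right)\in(\mathbb R^{d})^{\otimes(N+1)} .
\]
Because $\mathbf X^{(N)}$ is multiplicative in $T^{N}$, merging two adjacent sub-intervals $[t_{i-1},u],[u,t_{i}]$ into $[t_{i-1},t_{i}]$ leaves the degrees $\leqslant N$ untouched and changes $\mathbf X^{N+1,\mathcal P}_{s,t}$ by exactly $\sum_{j=1}^{N}\pi_{j}(\mathbf X^{(N)}_{t_{i-1},u})\otimes\pi_{N+1-j}(\mathbf X^{(N)}_{u,t_{i}})$, whose norm is at most $\frac{p^{2}}{\beta^{2}}\,\omega(t_{i-1},t_{i})^{(N+1)/p}/\Gamma((N+1)/p+1)$ by the neo-classical inequality and super-additivity of $\omega$. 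Since $(N+1)/p>1$, the elementary fact that any partition into $r$ intervals contains two adjacent ones of total $\omega$-length $\leqslant\frac{2}{r-1}\omega(s,t)$ lets me pass from $\mathcal P$ down to $\{s,t\}$ by successive merges; treating the final merge (the one whose merged interval is all of $[s,t]$) directly, so that its contribution is bounded by $\omega(s,t)^{(N+1)/p}$ rather than by the wasteful $2\,\omega(s,t)$, and noting that in every earlier merge the ratio $2/(r-1)$ is at most $1$ so that $(2/(r-1))^{(N+1)/p}\leqslant(2/(r-1))^{\theta_{0}}$, I obtain
\[
\bigl|\mathbf X^{N+1,\mathcal P}_{s,t}\bigr|\leqslant\frac{p^{2}K_{0}}{\beta^{2}}\cdot\frac{\omega(s,t)^{(N+1)/p}}{\Gamma((N+1)/p+1)}=\frac{1}{\beta}\cdot\frac{\omega(s,t)^{(N+1)/p}}{\Gamma((N+1)/p+1)}
\]
uniformly in $\mathcal P$. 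The same bound, applied on each sub-interval, shows that $\mathbf X^{N+1,\mathcal P}_{s,t}$ changes by an arbitrarily small amount when $\mathcal P$ is refined, provided its $\omega$-mesh is small; hence $\mathbf X^{N+1,\mathcal P}_{s,t}$ is Cauchy along refinements and I define $\pi_{N+1}(\mathbf X^{(N+1)}_{s,t})$ to be the limit. Chen's identity for $\mathbf X^{(N+1)}$ at degree $N+1$ follows by splitting a partition at an intermediate point and passing to the limit, using that the lower degrees are partition-independent and already multiplicative; taking $\mathcal P=\{s,t\}$ in the displayed estimate (where the left-hand side starts from $0$) gives the required bound at level $N+1$, completing the induction, and $S(\mathbf X)$ is then $\mathbf X^{(N)}$ read off level by level.

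It remains to prove uniqueness. Let $Y:\triangle\to T((\mathbb R^{d}))$ be multiplicative with $\pi^{(\lfloor p\rfloor)}(Y)=\mathbf X$ and every $\pi^{(M)}(Y)$ of finite $p$-variation. Arguing by induction on $N\geqslant\lfloor p\rfloor$, suppose $\pi_{i}(Y)=\pi_{i}(S(\mathbf X))$ for all $i\leqslant N$. Expanding $Y_{s,t}=Y_{t_{0},t_{1}}\otimes\cdots\otimes Y_{t_{m-1},t_{m}}$ for a partition $\mathcal P$ of $[s,t]$ and projecting onto degree $N+1$, every term other than $\sum_{l}\pi_{N+1}(Y_{t_{l-1},t_{l}})$ involves only components of degree $\leqslant N$, which by hypothesis agree with those of $S(\mathbf X)$ and hence reconstitute exactly $\mathbf X^{N+1,\mathcal P}_{s,t}$; on the other hand $\pi_{N+1}(Y)$ has finite $p$-variation with $(N+1)/p>1$, so $\sum_{l}|\pi_{N+1}(Y_{t_{l-1},t_{l}})|\to0$ as the mesh of $\mathcal P$ tends to zero. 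Letting the mesh tend to zero gives $\pi_{N+1}(Y_{s,t})=\lim_{\mathcal P}\mathbf X^{N+1,\mathcal P}_{s,t}=\pi_{N+1}(S(\mathbf X)_{s,t})$, which closes the induction and yields $Y=S(\mathbf X)$. I expect the whole difficulty to sit in the existence part, and specifically in two places: the neo-classical inequality itself, and the bookkeeping that keeps the constant $\beta$ from degenerating as the degree grows — it is precisely the $\Gamma$-normalisation of the estimates, together with the separate treatment of the coarsest merge, that makes a single $\beta$ work for all $N$ simultaneously.
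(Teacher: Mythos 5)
The paper does not prove this statement itself --- it is quoted as Lyons' extension theorem with a citation to \cite{MR1654527} --- and your argument is, in essence, exactly the proof given in that reference (and in Lyons--Caruana--L\'evy, Theorem 3.7): induction on the degree, construction of the next level as a limit of products over partitions, the maximal inequality obtained by successively merging the two adjacent subintervals of smallest combined control, the neo-classical inequality to propagate the $\Gamma$-normalised bounds, and the separate treatment of the coarsest merge so that a single constant $\beta$ works at every level. The outline is correct, including the uniqueness argument via the vanishing of $\sum_{l}|\pi_{N+1}(Y_{t_{l-1},t_{l}})|$ along fine partitions, so there is nothing to add beyond noting that you have reproduced the standard cited proof rather than found a new route.
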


\begin{defn}
$S\left(\mathbf{X}\right)_{0,1}\in T\left(\left(\mathbb{R}^{d}\right)\right)$
is called the \textit{signature} of the $p$-rough path $\mathbf{X}$. 
\end{defn}

If $x:\left[0,1\right]\rightarrow\mathbb{R}^{d}$ is a path with finite
$p$-variation for some $1\leqslant p<2$, then as a $p$-rough path
no higher levels of $x$ are needed and we can express the signature
of $x$ explicitly as 
\[
S\left(x\right)_{0,1}=\left(1,\int_{0<s_{1}<1}dx_{s_{1}},\ldots,\int_{0<s_{1}<\cdot<s_{n}<1}dx_{s_{1}}\otimes\ldots\otimes dx_{s_{n}},\ldots\right),
\]
where the iterated integrals are defined in the sense of Young.

A fundamental result in rough path theory, proved by Lyons \cite{MR1654527},
is the continuity of rough path integrals and the solution map for
rough differential equations with respect to the driving path under
the $p$-variation metric.

There is a special class of rough paths called geometric rough paths.
They are the simplest and very natural examples of rough paths which
we can define path integrals against one forms.
\begin{defn}
Given $p\geqslant1$. Let $G\Omega_{p}\left(\mathbb{R}^{d}\right)$
denote the completion of the set 
\[
\left\{ S_{\lfloor p\rfloor}(x):=\pi^{\left(\left\lfloor p\right\rfloor \right)}\left(S(x)\right):\; x\mbox{ has bounded total variation}\right\} 
\]
with respect to the $p$-variation metric $d_{p}$. $G\Omega_{p}(\mathbb{R}^{d})$
is called the space of \textit{geometric p-rough paths.}
\end{defn}

In \cite{CQ00}, Coutin and Qian showed that under certain conditions
on the decorrelation of the increment of a Gaussian process, with
probability one the lifting of the dyadic piecewise linear interpolation
of the Gaussian sample paths in $G\Omega_{p}\left(\mathbb{R}^{d}\right)$
is a Cauchy sequence under the $p$-variation metric. In \cite{FV10},
P. Friz and N. Victoir extended this result to a larger class of Gaussian
processes under certain regularity condition on the covariance function.
Moreover, they showed that the lifting of any sequence of piecewise
linear interpolation of the Gaussian sample paths in $G\Omega_{p}$
converges to the same limit. From here onwards, this limit will be
known as the \textit{canonical lifting} of the Gaussian process in
$G\Omega_{p}\left(\mathbb{R}^{d}\right)$. A fundamental example of
these results is fractional Brownian motion with Hurst parameter $H>1/4$.
It follows from Theorem \ref{Lyons' Extension Theorem} that the signature
$S(x)_{0,1}\in T(\mathbb{R}^{d})$ of fractional Brownian motion with
$H>1/4$ is well-defined for almost surely through the canonical lifting.

A detailed study on the geometric rough path nature of many interesting
and important stochastic processes can be found in \cite{FV10}.

\section{Main Results}

In this section we are going to state main results of the paper and
illustrate the idea of proofs.

Let $X=\{X_{t}:\ t\in[0,1]\}$ be a $d$-dimensional continuous stochastic
process starting at the origin, where $d\geqslant2$. We will always
assume that $X$ is realized on the path space $(W,\mathcal{B}(W),\mathbb{P}),$where
$W$ is the space of $\mathbb{R}^{d}$-valued continuous paths over
$[0,1]$ starting at the origin, $\mathcal{B}(W)$ is the Borel $\sigma$-algebra
over $W,$ and $\mathbb{P}$ is the law of $X.$

In the rest of this paper, we will make the following assumptions
on the law $\mathbb{P}.$ 

\textbf{Assumption (A)}: There exists a $\mathbb{P}$-null set $\mathcal{N}_{0}$
and a map $S:W\backslash\mathcal{N}_{0}\rightarrow C\left(\triangle;T\left(\left(\mathbb{R}^{d}\right)\right)\right)$,
such that for each $x\in W\backslash\mathcal{N}_{0}$ and $(s,t)\in\Delta$,
$\pi_{1}\left(S\left(x\right)_{s,t}\right)=x_{t}-x_{s}$ and $S\left(x\right)$
is the multiplicative extension of some geometric rough path $\mathbf{X}$
(see Theorem \ref{Lyons' Extension Theorem}). We will call such a
map $S$ a \textit{$\mathbb{P}$-almost sure lifting}. The integrals
with respect to $x$ will then be defined as integrating against the
geometric rough path $\mathbf{X}$. 

\textbf{Assumption (B)}: For any $0<t<1,$ the law of $x_{t}$ is
absolutely continuous with respect to the Lebesgue measure.

\textbf{Assumption (C)}: For any open cube $H\subset\mathbb{R}^{d}$,
there exists a differential one form $\phi=\sum_{i=1}^{d}\phi_{i}dx^{i}$
supported on the closure of $H$, such that for any $0\leqslant s<t\leqslant1,$
if we let 
\begin{equation}
A_{s,t}^{H}=\{x\in W:\ \mbox{there exists some \ensuremath{u\in(s,t)}}\ \mbox{such that \ensuremath{x_{u}\in H}}\},\label{travel through the interior}
\end{equation}
then 
\[
\mathbb{P}\left(\left\{ x\in W:\ \int_{s}^{t}\phi(dx_{u})=0\right\} \cap A_{s,t}^{H}\right)=0.
\]
Here $\int_{s}^{t}\phi(dx_{u})=\sum_{i=1}^{d}\int_{s}^{t}\phi_{i}(x_{u})dx_{u}^{i}$
is defined in the sense of rough paths according to Assumption (A).
\begin{rem}
As we've mentioned before, Assumption (A) is quite natural for a large
class of stochastic processes. Assumption (B) is also verified for
most of these processes, e.g., hypoelliptic diffusions, Gaussian processes,
solutions to hypoelliptic rough differential equations driven by Gaussian
processes, etc. These examples are well studied in \cite{FV10}. Assumption
(C) suggests certain kind of nondegeneracy for sample paths of the
process, which is essential for the recovery of a path from its signature
in our setting. By a closer look at Assumption (C), it actually excludes
the possibility of the sample paths being tree-like. Therefore, with
probability one the sample paths are already ``reduced'' paths in
the tree-like equivalence classes and it is natural to expect an inversion
scheme for the signature in our setting (see \cite{BGLY14}, \cite{tree like}
for the notion of tree-like paths). This is the main goal of the present
paper.

In the last section, as a fundamental example we will show that these
assumptions are all verified for a class of Gaussian processes including
fractional Brownian motion with Hurst parameter $H>1/4$, the Ornstein-Uhlenbeck
process and the Brownian bridge.
\end{rem}

Since we aim at recovering a path up to reparametrization from its
signature, we first give the definition of reparametrization.
\begin{defn}
\label{reparametrization}A \textit{reparametrization} is a continuous,
strictly increasing map $\sigma:\ [0,1]\rightarrow[0,1]$ with $\sigma(0)=0$
and $\sigma(1)=1.$ The group of reparametrizations is denoted by
$\mathcal{R}.$
\end{defn}

Now we are in a position to state our main results.
\begin{thm}
\label{main thm general framework}Assume that the law $\mathbb{P}$
of the stochastic process satisfies Assumption (A), (B) and (C). Let
$S$ be the $\mathbb{P}$-almost sure lifting as in Assumption (A).
Then there exists a $\mathbb{P}$-null set $\mathcal{N},$ such that
for any $x,x'\in\mathcal{N}^{c}$, if $S(x)_{0,1}=S(x')_{0,1}$, then
there exists some $\sigma\in\mathcal{R},$ such that 
\[
x{}_{t}=x'_{\sigma(t)},\ \forall t\in[0,1].
\]

\end{thm}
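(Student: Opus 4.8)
The plan is to reconstruct a generic sample path from its signature, up to reparametrization, by first recovering its trace and then the order in which it sweeps out that trace. The basic tool is that the signature over $[0,1]$ controls iterated integrals against smooth one-forms: I would first prove that if $x,x'\notin\mathcal{N}_0$ and $S(x)_{0,1}=S(x')_{0,1}$, then for every $n\geqslant1$ and all smooth, compactly supported one-forms $\phi_1,\dots,\phi_n$,
\[
\int_{0<u_1<\dots<u_n<1}\phi_1(x_{u_1})\otimes\cdots\otimes\phi_n(x_{u_n})\,dx_{u_1}\cdots dx_{u_n}=\int_{0<u_1<\dots<u_n<1}\phi_1(x'_{u_1})\otimes\cdots\otimes\phi_n(x'_{u_n})\,dx'_{u_1}\cdots dx'_{u_n}.
\]
Both traces lie in a common closed ball $K$; approximating each $\phi_i$ on a neighbourhood of $K$ by polynomial one-forms $\phi_i^{(m)}$ in a sufficiently strong $C^k$-norm (Stone--Weierstrass), the left-hand side for $\phi^{(m)}$ becomes a fixed linear functional of the truncated signature of $x$, hence equals the corresponding quantity for $x'$; letting $m\to\infty$ and invoking the continuity of iterated rough integration with respect to the one-forms (uniformly over paths confined to $K$) gives the identity.

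Next I would fix once and for all a countable family of open cubes $H$ with rational data, together with the one-forms $\phi_H$ supplied by Assumption (C), and set $\mathcal{N}_1$ to be the union, over all such $H$ and all rational $0\leqslant s<t\leqslant1$, of the $\mathbb{P}$-null sets $\{x:\int_s^t\phi_H(dx_u)=0\}\cap A_{s,t}^{H}$. Outside $\mathcal{N}_0\cup\mathcal{N}_1$ one reads off: whenever $x$ enters the open cube $H$ during $(s,t)$ one has $\int_s^t\phi_H(dx_u)\neq0$, while conversely $\int_s^t\phi_H(dx_u)\neq0$ forces $x$ to meet $\overline{H}=\operatorname{supp}\phi_H$; applying the $n=1$ case of the lemma with $(s,t)=(0,1)$ and taking unions over rational cubes, $S(x)_{0,1}=S(x')_{0,1}$ implies $x([0,1])=x'([0,1])$. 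I would also observe here that $\mathcal{N}_1$ already excludes every path possessing a constant stretch: a pause on an interval containing a rational subinterval $[s,t]$, at a point lying in a rational cube $H$, contributes a point of $\{x:\int_s^t\phi_H(dx_u)=0\}\cap A_{s,t}^{H}$.

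The substantial step is to upgrade equality of traces to equality up to reparametrization, and here I would use the tree-like picture. Equality of signatures means $x$ and $x'$ are tree-like equivalent; and two tree-like equivalent paths that are both \emph{reduced}, i.e. carry no nonconstant tree-like sub-loop, agree up to reparametrization. So the crux is to show that $\mathbb{P}$-almost every $x$ is reduced. If $x$ had a nonconstant tree-like sub-loop $x|_{[a,b]}$ (so $x_a=x_b$ and $S(x|_{[a,b]})_{a,b}=\mathbf{1}$, whence $\int_a^b\psi(dx_u)=0$ for every one-form $\psi$), then, the loop being nonconstant, it would sweep through some open rational cube $H$; one then wants to produce a \emph{rational} time-window on which $\int\phi_H(dx_u)$ still vanishes while the path still visits the open cube, contradicting Assumption (C). Extracting such a rational window from the tree structure --- where the naturally occurring interval $[a,b]$ need not be rational --- is the main obstacle, and is precisely the ``existence of a null set outside which no two paths are tree-like deformations of each other'' flagged in the introduction; I expect it to be handled by exploiting that sub-excursions of a tree-like loop are themselves tree-like loops, together with the continuity of $(s,t)\mapsto\int_s^t\phi_H(dx_u)$, to squeeze the argument down to rational endpoints. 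Assumption (B) enters to remove the remaining measure-zero degeneracies --- for instance paths returning to the origin at an interior time, or meeting a cube boundary at a rational time --- so that the combinatorial data recovered above faithfully encodes the reparametrization class.

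Finally one sets $\mathcal{N}=\mathcal{N}_0\cup\mathcal{N}_1\cup\mathcal{N}_2$, with $\mathcal{N}_2$ the null set assembled in the previous step from Assumptions (B) and (C); for $x,x'\notin\mathcal{N}$ with $S(x)_{0,1}=S(x')_{0,1}$ both paths are reduced and tree-like equivalent, so $x_t=x'_{\sigma(t)}$ for some $\sigma\in\mathcal{R}$. Besides the rational-window difficulty, the other point requiring care is checking that the ``reduced $+$ tree-like equivalent $\Rightarrow$ reparametrization equivalent'' principle, classical for bounded variation paths, remains valid in the geometric $p$-rough path category, which is where the deterministic results of \cite{BGLY14} would be invoked.
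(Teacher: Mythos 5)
Your architecture diverges from the paper's at the decisive point, and the divergence is exactly where the gap sits. The paper never invokes tree-like equivalence or the uniqueness theorem of \cite{BGLY14}: after establishing that the signature determines all extended signatures (your first step, which matches Proposition \ref{signature determines extended signatures}), it recovers from them the \emph{ordered} sequence of cubes visited (Lemma \ref{recovering the ordered sequence of boxes}, Proposition \ref{unique nonzero}), builds a polygonal approximation converging uniformly almost surely (Proposition \ref{prop:approximation}), and concludes with a Fr\'echet-type metric argument (Proposition \ref{d is a metric}). You instead reduce everything to three claims: (i) equal signature implies tree-like equivalent; (ii) almost every path is reduced; (iii) reduced plus tree-like equivalent implies equal up to reparametrization. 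Claims (i) and (iii) are deep imports from \cite{BGLY14} which you at least flag, but claim (ii) is precisely what the introduction calls ``highly nontrivial,'' and your sketch does not close it.

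Concretely: Assumption (C) controls $\int_s^t\phi_H(dx_u)$ only on a \emph{fixed} window $[s,t]$, so it can be exploited only over countably many (say rational) windows. A nonconstant tree-like excursion occupies a random window $[a,b]$ with path-dependent, generically irrational endpoints. Enlarging to rational $r_1<a<b<r_2$ does not preserve $\int_{r_1}^{r_2}\phi_H(dx_u)=0$, because the path may re-enter $\overline{H}$ (where $\phi_H\neq0$) during $[r_1,a]\cup[b,r_2]$; shrinking to rational $a<r_1<r_2<b$ destroys tree-likeness, so there is no reason for $\int_{r_1}^{r_2}\phi_H(dx_u)$ to vanish; and the sub-excursions of a height function, while individually tree-like, again form an uncountable, path-dependent family of intervals, so continuity alone does not let you land on a rational window on which the integral still vanishes while the path still visits the open cube. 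The paper's cube-and-tunnel geometry is engineered precisely so that the relevant integrals \emph{can} be moved to rational endpoints (just before each entrance time the path sits in the tunnel, where the one form vanishes), and that mechanism is absent from your scheme. Until you supply an actual proof that almost every path carries no nonconstant tree-like sub-loop --- or switch to the paper's direct reconstruction via the ordered cube sequence --- the argument is incomplete. (Your trace-recovery step and the observation that $\mathcal{N}_1$ kills constant stretches are both fine, but the former is redundant in your route and the latter matches the paper's proof that $\mathbb{P}(D)=0$.)
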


As a fundamental example, we will prove the following result for a
class of Gaussian processes satisfying conditions to be specified
later on in the final section. 
\begin{thm}
\label{main thm fBM}Let $\mathbb{P}$ be the law of a Gaussian process
satisfying conditions specified in Section 6. Then $\mathbb{P}$ satisfies
Assumption (A), (B), (C). In particular, the result holds for fractional
Brownian motion with Hurst parameter $H>1/4$, the Ornstein-Uhlenbeck
process and the Brownian bridge.
\end{thm}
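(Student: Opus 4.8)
We verify Assumptions (A), (B), (C) separately; (A) and (B) are soft consequences of the Gaussian rough path theory and of the non-degeneracy of Gaussian marginals, while (C) carries all the weight and is where the Malliavin calculus enters. For (A), the conditions of Section 6 will include a regularity hypothesis on the covariance function (finite two-dimensional $\rho$-variation for some $\rho\in[1,2)$), so that the canonical rough-path lift of Coutin--Qian and Friz--Victoir (\cite{CQ00}, \cite{FV10}) exists; this lift and its iterated integrals furnish the map $S$, whose first level is automatically $x_t-x_s$. Fractional Brownian motion with $H>1/4$ has $\rho=1/(2H)<2$, and the Ornstein--Uhlenbeck process and the Brownian bridge are semimartingales with covariances of finite one-variation, so (A) holds in all three cases. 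For (B), $x_t$ is a centred Gaussian vector and hence absolutely continuous precisely when its covariance matrix is non-singular; non-singularity for $t\in(0,1)$ will be among the Section 6 hypotheses and is immediate for the three examples ($t^{2H}I$, $\frac{\sigma^{2}}{2\theta}(1-e^{-2\theta t})I$, $t(1-t)I$).

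For (C), fix an open cube $H$ and choose $\phi$ supported on $\overline{H}$ of the form $\phi=g\,dx^{1}$ with $g\in C_c^{\infty}$ supported on $\overline{H}$ and strictly positive on $H$ (one bump one-form per cube). Fix $0\le s<t\le 1$ and set $F:=\int_s^t\phi(dx_u)$, the rough integral from (A). The first task is to show $F\in\mathbb{D}^{1,2}$: this rests on the Malliavin differentiability of integrals against a Gaussian rough path, which in turn rests on the complementary Young regularity of the Cameron--Martin space $\mathcal{H}$ (embedding into paths of finite $q$-variation with $\frac{1}{p}+\frac{1}{q}>1$)---a property of fBM with $H>1/4$ that will be part of the Section 6 hypotheses. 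The second task is to identify $DF$ through the directional derivative: differentiating $\int_s^t\phi(d(x+\varepsilon h))$ at $\varepsilon=0$ along $h\in\mathcal{H}$ gives $\langle DF,h\rangle_{\mathcal{H}}=\int_s^t\nabla g(x_u)\cdot h_u\,dx_u^{1}+\int_s^t g(x_u)\,dh_u^{1}$, and for $h$ supported in time on a subinterval of $(s,t)$, integrating the last (Young) integral by parts against the rough path recasts this, up to sign, as $\langle DF,h\rangle_{\mathcal{H}}=\int_s^t\sum_{j=2}^{d}\partial_j g(x_u)\,(h_u^{j}\,dx_u^{1}-h_u^{1}\,dx_u^{j})$, an area-type expression.

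By the local Bouleau--Hirsch criterion the law of $F$ restricted to $\{\|DF\|_{\mathcal{H}}>0\}$ is absolutely continuous, so $\mathbb{P}(F=0,\ \|DF\|_{\mathcal{H}}>0)=0$ and it suffices to prove $\mathbb{P}(\{\|DF\|_{\mathcal{H}}=0\}\cap A_{s,t}^{H})=0$; equivalently, a.s.\ on $A_{s,t}^{H}$ the functional $h\mapsto\langle DF,h\rangle_{\mathcal{H}}$ is not identically zero. On $A_{s,t}^{H}$, continuity of $x$ produces a random window $[a,b]\subset(s,t)$ with $x([a,b])$ inside a compact subcube of $H$; perturbations $h$ supported in time on $[a,b]$ lie in $\mathcal{H}$ because $\mathcal{H}$ contains sufficiently many smooth compactly supported directions (a further Section 6 hypothesis, valid for the three examples), so the problem reduces to showing that $h\mapsto\int_a^b\sum_{j\ge 2}\partial_j g(x_u)\,(h_u^{j}\,dx_u^{1}-h_u^{1}\,dx_u^{j})$ does not vanish identically, almost surely. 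Here one invokes the local non-determinism (local non-degeneracy) of the Gaussian process: conditionally on the path outside $[a,b]$ the law of $x$ on $[a,b]$ is still non-degenerate, which rules out the null event that the sample path inside $H$ is confined to the zero set of $\nabla g$ or otherwise annihilates every such directional derivative. A union of null sets over a countable family of windows then gives $\mathbb{P}(\{F=0\}\cap A_{s,t}^{H})=0$ for every $(s,t)$, which is (C); the three named processes follow by checking the Section 6 conditions for each, which is routine.

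The main obstacle is this last step of the argument for (C). On the one hand, the Malliavin differentiability and the derivative formula for rough integrals in the regime $1/4<H<1/2$, where the lift carries level-two or level-three iterated integrals, are technically delicate. On the other hand---and this is the real crux---proving that $h\mapsto\langle DF,h\rangle_{\mathcal{H}}$ is almost surely non-zero on $A_{s,t}^{H}$ uniformly in $(s,t)$, including realizations that only graze the boundary region of $H$ where $g$ and $\nabla g$ are small, is precisely where the interplay between the structure of the Cameron--Martin space and the local non-degeneracy of Gaussian functionals has to be brought to bear; this is the part with no counterpart in the Markovian arguments of \cite{LQ12}.
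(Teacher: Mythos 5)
Your verification of (A) and (B), and the overall architecture for (C) (lift the path, show $F=\int_s^t\phi(dx_u)$ is Malliavin differentiable, apply the local Bouleau--Hirsch criterion so that it suffices to prove $DF\neq 0$ a.s.\ on $A_{s,t}^{H}$) all match the paper. But the step you flag as ``the real crux'' is exactly where your argument stops being an argument: you assert that local non-determinism of the conditional law of $x$ on a window $[a,b]$ ``rules out the null event that the sample path \ldots annihilates every such directional derivative.'' The event $\{\langle DF(x),h\rangle_{\mathcal H}=0\ \text{for all admissible }h\}$ depends on the whole path through the nonlinear rough integrals $Y_{u,j}=\int_s^u(\partial_j\phi_i-\partial_i\phi_j)(x_v)\,dx_v^i$, and no conditioning statement by itself converts ``the conditional law is non-degenerate'' into ``this nonlinear functional is a.s.\ nonzero.'' The paper closes this gap with two concrete devices that your proposal lacks. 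First, it chooses a \emph{path-dependent} Cameron--Martin direction $h_u^i=\int_s^u(\phi_i(x_t)+Y_{t,i}-Y_{v,i})\,dv$ (admissible by (G3) since $Y$ is H\"older of order close to $H$), for which $\langle DF(x),h\rangle_{\mathcal H}=\sum_i\int_s^t(\phi_i(x_t)+Y_{t,i}-Y_{u,i})^2du$; this sum-of-squares identity reduces $DF=0$ to the deterministic statement that $\int_u^v\frac{\partial\phi_1}{\partial x^2}(x_r)\,dx_r^1=0$ for every $[u,v]\subset[s,t]$. Second, Lemma 6.1 upgrades this to $\frac{\partial\phi_1}{\partial x^2}(x_u)=0$ for all $u$, and its proof is purely pathwise: an Euler/Taylor estimate for the rough integral plus the variance lower bound (G2) and Borel--Cantelli, which guarantee that along dyadic scales $|x_{u+2^{-n}}^1-x_u^1|$ is not too small while the level-$2$ and level-$3$ remainders are. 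Your proposal never uses (or even identifies the need for) a lower bound of type (G2), and without some such quantitative non-degeneracy the passage from ``all subinterval integrals vanish'' to ``the integrand vanishes along the path'' fails in the rough regime where $x^1$ is not of bounded variation.

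Two smaller points. Restricting to $h$ supported in time on $[a,b]\subset(s,t)$ discards the boundary terms that the paper's construction exploits (its $h$ has $h_t\neq0$), and it leaves you needing to show that $\int h^i\,dY_i=0$ for all such $h$ forces $Y$ constant --- again a statement about a rough integral that needs the same quantitative input. Finally, your treatment does not address the Brownian bridge, whose Cameron--Martin space carries the constraint $h_1=0$ and so fails (G3) as stated; the paper needs a separate modification of $h$ near $t=1$ to cover it, and your ``routine checking'' would break there.
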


Before going into the mathematical proofs, we first describe the strategy
informally. The approximation scheme we are going to use is an adaptation
from the work of Le Jan and Qian \cite{LQ12}. However, the main difficulties
are in the development of each step in the non-Markov setting, which
will be clear in the detailed proofs.

\textit{Step One.} Prove that if two paths have the same signature,
then the iterated integrals of the paths along any finite sequence
of smooth one forms are the same. Following \cite{LQ12}, these iterated
integrals along one forms will be called \textit{extended signatures.}

\textit{Step Two.} Decompose the Euclidean space $\mathbb{R}^{d}$
into disjoint identical open cubes with small tunnels between them.
For each such cube, we define a differential one form supported on
the closure of the cube according to Assumption (C).

\textit{Step Three.} Show that, for each path $x$ outside a $\mathbb{P}$-null
set, the ordered sequence of cubes visited by $x$ corresponds to
the unique maximal sequence of differential one forms along which
the extended signature of $x$ is nonzero. This together with step
one allows us to recover the ordered sequence of cubes visited by
$x$ from its signature. 

\textit{Step Four.} Construct a polygonal approximation of $x$ by
joining the centers of cubes visited by $x$ in order. This polygonal
path will be parametrized so that it is at the center of the cube
at the time when the cube is first visited by $x$. Show that with
probability one, as the size of cubes tends to zero, the polygonal
path converges to the original path $x$ under the uniform topology.

\textit{Step Five.} Since the signature is invariant under the reparametrization
of the path, it is not possible to recover the exact visit times of
the cubes. If two paths have the same signature, then the corresponding
polygonal paths constructed in (3) are only equal up to a reparametrization.
Therefore, we need to introduce a variant of the Fréchet distance
on $W$ measuring the distance of two paths modulo parametrization.
We should also prove that outside a $\mathbb{P}$-null set this is
indeed a metric. It will then follow from step four that if two paths
$x$ and $x^{\prime}$ have the same signature, their corresponding
approximation paths converge to the same limit under this metric,
which will imply that $x$ and $x'$ are equal up to a reparametrization.

For the Gaussian case, Assumption (A) is verified from \cite{FV10}
and Assumption (B) is trivial by definition. By using the Malliavin calculus, 
for each open cube $H$ we will explicitly construct a differential
one form $\phi$ supported on $\overline{H}$ such that the functional
$x\rightarrow\int_{s}^{t}\phi(dx_{u})$ has a density conditioned
on the set $A_{s,t}^{H}$. This certainly verifies Assumption (C).

\section{Signature Determines Extended Signatures}

Starting from this section, we are going to develop the detailed proofs
of our main results.

As the first step, here we will prove that if two sample paths as
geometric rough paths have the same signatures, then they have the
same extended signatures. Note that the signatures and extended signatures
are well-defined for $\mathbb{P}$-almost surely according to Assumption
(A). For the general theory of integration along one forms against
rough paths, see for example \cite{FV10}, \cite{LyonsQ}. 

From now on, for a geometric rough path $\mathbf{X}$ and a finite
sequence $(\phi^{1},\cdots,\phi^{n})$ of differential one forms $\phi^{1},\ldots,\phi^{n}$,
we will use $\left[\phi^{1},\cdots,\phi^{n}\right]_{0,1}\left(x\right)$
to denote the iterated path integral $\int_{0}^{1}\cdots\int_{0}^{s_{2}}\phi^{1}\left(d\mathbf{X}_{s_{1}}\right)\cdots\phi^{n}\left(d\mathbf{X}_{s_{n}}\right)$,
where $x_{\cdot}:=\pi_{1}(S(\mathbf{X})_{0,\cdot})$ is the first
level path of $\mathbf{X}.$ A simple way of understanding this integral
is via 
\[
\int_{0}^{1}\ldots\int_{0}^{s_{2}}\phi^{1}\left(d\mathbf{X}_{s_{1}}\right)\ldots\phi^{n}\left(d\mathbf{X}_{s_{n}}\right)=\lim_{k\rightarrow\infty}\int_{0}^{1}\cdots\int_{0}^{s_{2}}\phi^{1}(dx_{s_{1}}^{(k)})\cdots\phi^{n}(dx_{s_{n}}^{(k)}),
\]
where by the definition of geometric rough paths $x^{(k)}$ is a sequence
of paths with bounded total variation whose lifting converges to $\mathbf{X}$
under the $p$-variation metric. Sometimes we will also use the notation
$\int_{0}^{1}\ldots\int_{0}^{s_{2}}\phi^{1}\left(dx_{s_{1}}\right)\ldots\phi^{n}\left(dx_{s_{n}}\right)$
to denote the path integral. Note that the ordering of $(\phi^{1},\cdots,\phi^{n})$
is noncommutative in this notation. 

Now we have the following result.
\begin{prop}
\label{signature determines extended signatures} Given $p\geqslant1,$
let $\mathbf{X},\mathbf{X}^{\prime}\in G\Omega_{p}\left(\mathbb{R}^{d}\right)$
be two geometric $p$-rough paths. Suppose that $\phi^{1},\ldots,\phi^{n}$
are $n$ compactly supported $C^{\infty}$-one forms. If $S\left(\mathbf{X}\right)_{0,1}=S\left(\mathbf{X}^{\prime}\right)_{0,1}$,
then 
\[
\left[\phi^{1},\ldots,\phi^{n}\right]_{0,1}\left(x\right)=\left[\phi^{1},\ldots,\phi^{n}\right]_{0,1}\left(x^{\prime}\right),
\]
where $x$ and $x'$ are the first level paths of $\mathbf{X}$ and
$\mathbf{X}'$ respectively.
\end{prop}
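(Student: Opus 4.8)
The plan is to reduce the statement about iterated integrals along one-forms to a statement about the signature itself by encoding the one-forms into an enlarged rough path. Concretely, given the geometric $p$-rough path $\mathbf{X}$ with first level path $x$ in $\mathbb{R}^d$, I would consider the response ordinary differential equation
\[
dy_s = \sum_{j=1}^{n} \phi^{j}(x_s)\, dx_s \otimes e_j, \qquad y_0 = 0,
\]
driving an auxiliary path in $\mathbb{R}^{n}$ (or, to keep everything geometric, I would work with the joint rough path $(\mathbf{X}, \mathbf{Y})$ over $\mathbb{R}^{d+n}$, or even iterate the construction $n$ times to capture the nested integrals). The key classical fact from rough path theory that I would invoke is that the signature of the solution to a rough differential equation, and more generally the full collection of iterated integrals of the joint lift $(x, y)$, is determined by the signature of the driving path $\mathbf{X}$ alone --- this is a consequence of the universal limit theorem and the continuity of the Itô--Lyons map under the $p$-variation metric, together with the fact that on bounded variation paths the iterated integrals $\left[\phi^1,\ldots,\phi^n\right]$ are genuine Riemann--Stieltjes integrals that are polynomial (via the shuffle/Chen relations) in the iterated integrals of the joint path.

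In more detail, the steps I would carry out are: first, approximate $\mathbf{X}$ and $\mathbf{X}'$ by bounded variation paths $x^{(k)}, x'^{(k)}$ whose lifts converge in $d_p$, as guaranteed by the definition of $G\Omega_p(\mathbb{R}^d)$; second, on each bounded variation path observe that $\left[\phi^1,\ldots,\phi^n\right]_{0,1}(x^{(k)})$ is a continuous function (in fact a fixed polynomial) of finitely many components of the signature of the extended path $(x^{(k)}, \int \phi^1(dx^{(k)}), \ldots)$ --- this is where one uses that smooth one-forms compose nicely and that iterated Young integrals against a bounded variation path obey the usual integration-by-parts/shuffle identities; third, invoke Lyons' continuity theorem to pass to the limit on both sides, obtaining that $\left[\phi^1,\ldots,\phi^n\right]_{0,1}(\mathbf{X})$ depends continuously on $\mathbf{X}$ in $d_p$ and equals the limit of the bounded variation quantities; fourth, note that the signature of the extended rough path is a function of $S(\mathbf{X})_{0,1}$ only, so $S(\mathbf{X})_{0,1} = S(\mathbf{X}')_{0,1}$ forces equality of all these extended iterated integrals. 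An alternative, perhaps cleaner route avoiding the explicit RDE: since $\phi^j$ are smooth and compactly supported, approximate each $\phi^j$ uniformly together with derivatives by polynomial one-forms on a large ball containing the (compact) ranges of $x$ and $x'$; for \emph{polynomial} one-forms the iterated integral $\left[\phi^1,\ldots,\phi^n\right]_{0,1}$ is literally a finite linear combination of coordinate iterated integrals, i.e.\ a fixed linear functional of $S(\mathbf{X})_{0,1}$, hence equal for $\mathbf{X}$ and $\mathbf{X}'$; then pass to the limit using the continuity of iterated rough integrals with respect to the one-form in the $C^1$ (or $\mathrm{Lip}$) topology, which is standard.

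The main obstacle I anticipate is the last limiting argument: one must control the iterated rough integral $\left[\phi^1,\ldots,\phi^n\right]_{0,1}(\mathbf{X})$ in terms of $\|\mathbf{X}\|_{p\text{-var}}$ and the $\mathrm{Lip}$-norms of the $\phi^j$ uniformly, so that polynomial approximation of the one-forms really does converge. This requires the quantitative estimates for integration of one-forms against geometric rough paths (the ``rough integration'' bounds in Friz--Victoir or Lyons--Caruana--Lévy), applied iteratively $n$ times; the bookkeeping of constants depending on $n$ and on the support is routine but must be set up carefully so that both the approximation of $\phi^j$ by polynomials and the approximation of $\mathbf{X}$ by bounded variation lifts can be taken in the right order. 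Everything else --- Chen's identity, the shuffle product structure, and the fact that a linear functional of the signature is the same for two rough paths with equal signature --- is immediate.
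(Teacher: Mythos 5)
Your second, ``alternative'' route is precisely the paper's proof: approximate the $\phi^j$ together with their derivatives by polynomial one-forms on a single compact set containing the ranges of both $x$ and $x'$ (the paper cites Bagby--Bos--Levenberg for the simultaneous approximation), observe via integration by parts and the shuffle product that for polynomial one-forms the iterated integral of a bounded variation path is a fixed \emph{linear} functional of its signature (the paper's Lemma \ref{signature determines extended signatures as lemma}), and pass to the limit using the joint continuity of the integration map $(\phi,\mathbf{X})\mapsto\int\phi(d\mathbf{X})$ in the $\mathrm{Lip}$ and $p$-variation topologies (the paper cites Friz--Victoir, Theorem 10.47, which also settles the uniform-bound concern you raise at the end). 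That part of your proposal is correct and complete in outline. Your first route, however, contains a circular step as stated: the universal limit theorem and the continuity of the It\^o--Lyons map show that the joint lift $(\mathbf{X},\mathbf{Y})$ and its iterated integrals are determined by the rough path $\mathbf{X}$ as a multiplicative functional on $\triangle$, \emph{not} by the single group-like element $S(\mathbf{X})_{0,1}$; two geometric rough paths can share the same signature over $[0,1]$ without being equal (this is exactly the phenomenon the whole paper is about), so ``the signature of the extended rough path is a function of $S(\mathbf{X})_{0,1}$ only'' is the assertion to be proved, not a fact one can simply note. The only mechanism that upgrades ``function of $\mathbf{X}$'' to ``function of $S(\mathbf{X})_{0,1}$'' is the reduction to linear functionals of the signature via polynomial one-forms and shuffles --- i.e.\ your second route. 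So keep the second argument and discard the first, or at least rephrase it so that the polynomial/shuffle reduction carries the logical weight.
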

To prove Proposition \ref{signature determines extended signatures},
first notice that the case of polynomial one forms follows immediately
from integration by parts and the shuffle product formula for the
signature (see \cite{LQ12}, p. 4 and \cite{LCL07}, Theorem 2.15). 
\begin{lem}
\label{signature determines extended signatures as lemma}Let $\phi_{1},\ldots,\phi_{n}$
be $n$ polynomial one forms. Let $x$ be a continuous path with bounded
total variation with $x_{0}=0$. Then there exists a linear functional
$f$ on $T\left(\mathbb{R}^{d}\right)$ such that 
\begin{equation}
\left[\phi^{1},\ldots,\phi^{n}\right]_{0,1}\left(x\right)=f\left(S\left(x\right)_{0,1}\right).\label{eq:polynomial functional}
\end{equation}

\end{lem}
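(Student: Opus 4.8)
The plan is to prove Lemma~\ref{signature determines extended signatures as lemma} by reducing the iterated integral of polynomial one forms to a finite linear combination of ordinary iterated integrals of the coordinate path, which by definition are the components of the signature $S(x)_{0,1}$ living in the tensor algebra. Write each one form as $\phi^k = \sum_{i} P^k_i(x)\,dx^i$ where $P^k_i$ is a polynomial, and observe that a polynomial in $x_{s_k}$ can itself be written as a polynomial in the increments $x_{s_k} - x_{s_{k-1}}, x_{s_{k-1}} - x_{s_{k-2}},\ldots$ or, more directly, each monomial $x^{j_1}_{s}\cdots x^{j_m}_{s}$ equals an iterated integral $\int_{0<u_1<\cdots<u_m<s} dx^{j_1}_{u_1}\cdots dx^{j_m}_{u_m}$ summed over all shuffle orderings. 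Thus $\phi^k(dx_{s_k})$ is a finite linear combination of expressions of the form $\int_{0<u_1<\cdots<u_m<s_k} dx^{j_1}_{u_1}\cdots dx^{j_m}_{u_m}\,dx^{i}_{s_k}$.

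The next step is to substitute these expansions into the nested integral $\int_{0<s_1<\cdots<s_n<1}\phi^1(dx_{s_1})\cdots\phi^n(dx_{s_n})$ and use the shuffle product formula for the signature. Concretely, the product of the inner "polynomial" iterated integrals (each over a simplex with top endpoint $s_k$) and the outer simplex constraint $s_1 < \cdots < s_n$ can be re-expressed, via repeated application of the identity that a product of two iterated integrals is a sum of iterated integrals over the shuffled index set (Theorem~2.15 of \cite{LCL07}, equivalently the computation on p.~4 of \cite{LQ12}), as a single finite $\mathbb{Q}$-linear combination of honest iterated integrals $\int_{0<u_1<\cdots<u_N<1} dx^{k_1}_{u_1}\cdots dx^{k_N}_{u_N}$. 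Each such integral is precisely $\langle e_{k_1}\otimes\cdots\otimes e_{k_N}, S(x)_{0,1}\rangle$, a coordinate of the signature; collecting the coefficients defines the desired linear functional $f$ on $T(\mathbb{R}^d)$, and the identity \eqref{eq:polynomial functional} follows. Since all manipulations are purely algebraic identities for bounded-variation paths (where every integral is a classical Riemann--Stieltjes integral and Fubini/integration by parts apply freely), the argument is rigorous in that regularity class.

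The one point requiring a little care — and the closest thing to an obstacle — is bookkeeping: making sure the combinatorial identity "polynomial evaluated at $x_s$ = shuffle sum of iterated integrals up to $s$" is applied consistently and that the resulting (a priori infinite-looking) expansion is genuinely finite. It is finite because each $P^k_i$ has bounded degree, so the length $N$ of the iterated integrals produced is bounded by $n + \sum_k \max_i \deg P^k_i$, and only finitely many multi-indices of bounded length occur; hence $f$ is supported on $\bigoplus_{j=0}^{N} (\mathbb{R}^d)^{\otimes j}$ and is a well-defined linear functional. Once the lemma is in hand, Proposition~\ref{signature determines extended signatures} for general compactly supported $C^\infty$ one forms will follow by approximating $\phi^1,\ldots,\phi^n$ uniformly (together with derivatives, on the relevant compact set) by polynomial one forms, using the continuity of iterated rough-path integrals in the one forms and in the $p$-variation metric, and passing to the limit on both sides using $S(\mathbf{X})_{0,1} = S(\mathbf{X}')_{0,1}$; but the present task is only the polynomial case, which is settled by the algebraic reduction above.
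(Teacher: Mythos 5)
Your proposal is correct and follows essentially the same route as the paper, which disposes of the polynomial case in one line by invoking integration by parts and the shuffle product formula (citing \cite{LQ12}, p.~4 and \cite{LCL07}, Theorem 2.15); your expansion of monomials in $x_{s}$ (using $x_0=0$) into shuffle sums of iterated integrals, followed by interleaving the inner simplices with the outer one, is just a fleshed-out version of that argument, including the correct observation that the resulting functional $f$ is supported on tensors of bounded degree.
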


Proposition \ref{signature determines extended signatures} then follows
from polynomial approximations.

\begin{proof}[Proof of Proposition \ref{signature determines extended signatures}]
We write $\phi^{i}$ as $\phi^{i}=\sum_{j=1}^{d}\phi_{j}^{i}\left(x\right)dx^{j}$.
Let $K$ be a compact neighborhood of $x([0,1])\cup x'([0,1])$. According
to \cite{BBL02}, Theorem 1, for each $\alpha>0$ and each $j$, there
exists a polynomial sequence $\phi_{j}^{i\left(m\right)}$ such that
\[
\sup_{K}\left|D^{\alpha}\left(\phi_{j}^{i}-\phi_{j}^{i(m)}\right)\right|\rightarrow0
\]
as $m\rightarrow\infty$. Let $\phi^{i\left(m\right)}\left(x\right)=\sum_{j=1}^{d}\phi_{j}^{i\left(m\right)}\left(x\right)dx^{j}$.
As $\mathbf{X}\in G\Omega_{p}\left(\mathbb{R}^{d}\right)$, by definition
there exists a sequence $x^{(k)}$ of paths with bounded total variation,
such that $d_{p}\left(S_{\left\lfloor p\right\rfloor }\left(x^{\left(k\right)}\right),\mathbf{X}\right)\rightarrow0$
as $k\rightarrow\infty.$ Since the integration map $\left(\phi,\mathbf{X}\right)\rightarrow\int_{0}^{1}\phi(d\mathbf{X}_{t})$
is jointly continuous under the Lip$\left(\alpha\right)$ and $p$-variation
norms whenever $\alpha>p+1$ (see \cite{FV10}, Theorem 10. 47), we
have
\[
\left[\phi^{1\left(m\right)},\ldots,\phi^{n\left(m\right)}\right]_{0,1}\left(x^{\left(k\right)}\right)\rightarrow\left[\phi^{1},\ldots,\phi^{n}\right]_{0,1}\left(x\right),
\]
as $m,k\rightarrow\infty$. Now the result follows from Lemma \ref{signature determines extended signatures as lemma}.\end{proof}

\section{The Strengthened Le Jan-Qian Approximation Scheme and the Uniqueness
of Signature}

Now fix $\varepsilon,\delta>0$ with $\delta<<\varepsilon$. 

For any integer point $z=(z^{1},z^{2},\ldots,z^{d})\in\mathbb{Z}^{d},$
let $H_{z}^{\varepsilon,\delta}$ be the open cube centered at $\varepsilon z$
with edges of length $\varepsilon-\delta.$ In other words,
\[
H_{z}^{\varepsilon,\delta}=\left\{ x\in\mathbb{R}^{d}:\ \left|x^{i}-\varepsilon z^{i}\right|<\frac{\varepsilon-\delta}{2},\ \forall i=1,\cdots,d\right\} .
\]
Geometrically, the space $\mathbb{R}^{d}$ is divided into disjoint
identical open cubes and small closed tunnels.

For any $x\in W$ and $k\geqslant1,$ define recursively 
\[
\tau_{k}^{\varepsilon,\delta}=\inf\left\{ t\in\left[\tau_{k-1}^{\varepsilon,\delta},1\right]:\ x_{t}\in\bigcup_{z\neq\boldsymbol{m}_{k-1}^{\varepsilon,\delta}}H_{z}^{\varepsilon,\delta}\right\} ,
\]
and $\boldsymbol{m}_{k}^{\varepsilon,\delta}$ to be the integer point
$z\in\mathbb{Z}^{d}$ such that 
\[
x_{\tau_{k}^{\varepsilon,\delta}}\in H_{z}^{\varepsilon,\delta},
\]
where $\tau_{0}^{\varepsilon,\delta}=0,\ \boldsymbol{m}_{0}^{\varepsilon,\delta}=0\in\mathbb{Z}^{d}$.
Let 
\[
N^{\varepsilon,\delta}=\sup\left\{ k\geqslant1:\ \tau_{k}^{\varepsilon,\delta}<1\right\} ,
\]
where $\sup\emptyset:=0.$ The sequence $\left\{ \tau_{k}^{\varepsilon,\delta}\right\} $
records the successive visit times of the open cubes by the path,
the sequence $\left\{ \boldsymbol{m}_{k}^{\varepsilon,\delta}\right\} $
records the cubes visited in order, and $N^{\varepsilon,\delta}$
records the total number of cubes visited. Note that revisit of the
same cube after visiting some other cubes counts, but revisit before
visiting any other cube does not count. By continuity and compactness,
it is easy to see that for any $x\in W,$ $0\leqslant N^{\varepsilon,\delta}<\infty.$

Here and thereafter, for notation simplicity we drop the dependence
on $x$ for these random variables on $W.$
\begin{rem}
It is important to use the open cubes instead of the closed ones,
as we are only interested in the case when a path $x$ travels through
the interior of a cube. Hence these $\tau_{k}^{\varepsilon,\delta}$
are not stopping times with respect to the natural filtration.
\end{rem}

For each cube $H_{z}^{\varepsilon,\delta}$, let $\phi_{z}^{\varepsilon,\delta}$
be the differential one form given in Assumption (C). In particular,
$\phi_{z}^{\varepsilon,\delta}$ is supported on the closure of $H_{z}^{\varepsilon,\delta}$,
and $\phi_{z}^{\varepsilon,\delta}=0$ on $\partial H.$

\subsection{Recovery of Cubes Visited in Order by Using the Extended Signature}

Let $\mathcal{W}_{m}$ ($m\geqslant0$) be the set of words $(z_{0}=0,z_{1},\cdots,z_{m})$
with $z_{i}\neq z_{i+1},$ $z_{i}\in\mathbb{Z}^{d}$, and let $\mathcal{W}=\bigcup_{m\geqslant0}\mathcal{W}_{m}$.
Elements of $\mathcal{W}$ are called \textit{admissible words}. 

For $w=(z_{0},z_{1},\cdots,z_{m})\in\mathcal{W},$ define 
\[
E_{w}^{\varepsilon,\delta}=\left\{ x\in W:\ N^{\varepsilon,\delta}=m,\ \boldsymbol{m}_{k}^{\varepsilon,\delta}=z_{k},\ k=0,\cdots,m\right\} .
\]
It follows that $W$ can be written as the disjoint union $W=\bigcup_{w\in\mathcal{W}}E_{w}^{\varepsilon,\delta}$. 

Now we have the following result.
\begin{lem}
\label{recovering the ordered sequence of boxes}For any $m\geqslant0,$
if $w=(z_{0}=0,\cdots,z_{m})\in\mathcal{W}_{m}$ and $x\in E_{w}^{\varepsilon,\delta},$
then 

(1) 
\begin{equation}
\left[\phi_{z_{0}}^{\varepsilon,\delta},\cdots,\phi_{z_{m}}^{\varepsilon,\delta}\right]_{0,1}(x)=\prod_{i=1}^{m+1}\int_{\tau_{i-1}^{\varepsilon,\delta}}^{\tau_{i}^{\varepsilon,\delta}}\phi_{z_{i-1}}^{\varepsilon,\delta}(dx_{t}),\label{breaking into product}
\end{equation}
where $\tau_{m+1}^{\varepsilon,\delta}=1$ by definition since $x\in E_{w}^{\varepsilon,\delta}.$ 

(2) For any $w'=(z_{0},z'_{1},\cdots,z'_{n})\in\mathcal{W}_{n}$ with
$n>m,$ 
\[
\left[\phi_{z_{0}}^{\varepsilon,\delta},\cdots,\phi_{z'_{n}}^{\varepsilon,\delta}\right]_{0,1}(x)=0.
\]

(3) For any $w'=(z_{0},z'_{1},\cdots,z'_{m})$ with $w'\neq w,$ 
\[
\left[\phi_{z_{0}}^{\varepsilon,\delta},\cdots,\phi_{z'_{m}}^{\varepsilon,\delta}\right]_{0,1}(x)=0.
\]
\end{lem}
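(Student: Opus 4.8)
The plan is to exploit the fact that each one form $\phi_z^{\varepsilon,\delta}$ is supported on the \emph{closed} cube $\overline{H_z^{\varepsilon,\delta}}$ and vanishes on $\partial H_z^{\varepsilon,\delta}$, so that an iterated integral along a word of one forms can only be nonzero if the path successively enters the interiors of the corresponding cubes in the prescribed order. First I would record the elementary additivity/multiplicativity identity for iterated path integrals: for any $0 = t_0 \leqslant t_1 \leqslant \cdots \leqslant t_{m+1} = 1$,
\[
\left[\psi^1,\ldots,\psi^m\right]_{0,1}(x) = \sum \prod_{j} \left[\psi^{\cdot},\ldots\right]_{t_{k_{j-1}},t_{k_j}}(x),
\]
where the sum is over all ways of distributing the one forms among the subintervals in order; this follows from Chen's identity applied to the signature restricted to the simplex, or more concretely from splitting each Young/rough integral at the points $t_i$. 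I would apply this with the $t_i = \tau_i^{\varepsilon,\delta}$.

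For part (1), the key observation is that on $E_w^{\varepsilon,\delta}$ the path $x$ lies in $\overline{H_{z_{i-1}}^{\varepsilon,\delta}}$ throughout $[\tau_{i-1}^{\varepsilon,\delta},\tau_i^{\varepsilon,\delta}]$ — indeed, by definition of the stopping times, on $(\tau_{i-1}^{\varepsilon,\delta},\tau_i^{\varepsilon,\delta})$ the path does not enter any cube other than $H_{z_{i-1}}^{\varepsilon,\delta}$, hence stays in the complement of $\bigcup_{z\neq z_{i-1}} H_z^{\varepsilon,\delta}$, which together with $x_{\tau_{i-1}^{\varepsilon,\delta}} \in H_{z_{i-1}^{\varepsilon,\delta}}$ and continuity forces $x_t \in \overline{H_{z_{i-1}}^{\varepsilon,\delta}}$ for $t$ in the closed subinterval. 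Since $\phi_{z_j}^{\varepsilon,\delta}$ is supported on $\overline{H_{z_j}^{\varepsilon,\delta}}$ and these closed cubes are pairwise disjoint (they are separated by the tunnels of width $\delta$), the only term in the decomposition that can survive is the one in which exactly the single one form $\phi_{z_{i-1}}^{\varepsilon,\delta}$ is integrated over $[\tau_{i-1}^{\varepsilon,\delta},\tau_i^{\varepsilon,\delta}]$ for each $i$; every other allocation integrates some $\phi_{z_j}^{\varepsilon,\delta}$ over a subinterval where the path avoids $\overline{H_{z_j}^{\varepsilon,\delta}}$, giving $0$. One has to be slightly careful that this "restrict the integral to a subinterval where the integrand's support is missed, hence the integral vanishes" argument is valid at the rough path level and not just for bounded variation paths; I would justify it by the continuity of the rough integral under the $p$-variation metric together with the fact that each bounded variation approximant $x^{(k)}$ can be chosen to also avoid a neighbourhood of the relevant closed cube on a slightly shrunk subinterval, or alternatively by using that the rough integral of a one form supported off the range of the (rough) path is zero. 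This is the step I expect to be the main technical obstacle — making precise that the support/locality property of one-form integration passes to geometric rough paths — though morally it is immediate.

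Parts (2) and (3) then follow by the same locality principle combined with a counting/ordering argument. For part (2), given a word $w'$ of length $n > m$, apply the decomposition of $\left[\phi_{z_0}^{\varepsilon,\delta},\ldots,\phi_{z'_n}^{\varepsilon,\delta}\right]_{0,1}(x)$ over the $m+1$ subintervals $[\tau_{i-1}^{\varepsilon,\delta},\tau_i^{\varepsilon,\delta}]$: each subinterval can contribute a nonzero factor only from the one form $\phi_{z_{i-1}}^{\varepsilon,\delta}$ (whose support is the unique closed cube containing $x$ on that subinterval) and, because the one forms appear in a fixed noncommutative order, the nonzero factors must pick out a subsequence of $(z_0, z'_1,\ldots,z'_n)$ that agrees term by term, in order, with $(z_0, z_1, \ldots, z_m)$; but a word of length $n+1 > m+1$ cannot have all $n+1$ of its letters consumed this way across only $m+1$ subintervals with at most one letter consumed per subinterval. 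Hence at least one $\phi_{z'_j}^{\varepsilon,\delta}$ is integrated over a subinterval missing $\overline{H_{z'_j}^{\varepsilon,\delta}}$ and the whole term vanishes. For part (3), with $w'$ of the same length $m$ but $w' \neq w$: the only allocation with a chance of being nonzero is the "diagonal" one assigning $\phi_{z'_{i-1}}^{\varepsilon,\delta}$ to $[\tau_{i-1}^{\varepsilon,\delta},\tau_i^{\varepsilon,\delta}]$, which requires $\operatorname{supp}\phi_{z'_{i-1}}^{\varepsilon,\delta} = \overline{H_{z'_{i-1}}^{\varepsilon,\delta}}$ to meet $x([\tau_{i-1}^{\varepsilon,\delta},\tau_i^{\varepsilon,\delta}]) \subset \overline{H_{z_{i-1}}^{\varepsilon,\delta}}$; by disjointness of the closed cubes this forces $z'_{i-1} = z_{i-1}$ for all $i$, i.e. $w' = w$, a contradiction. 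Therefore the integral is $0$. Throughout I would rely on Proposition \ref{signature determines extended signatures} only implicitly (the statement here is purely about a single path), the real inputs being Chen's identity, the locality of one-form integration against rough paths, and the geometric separation built into the cubes-and-tunnels decomposition.
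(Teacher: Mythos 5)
Your route is genuinely different from the paper's. You decompose the iterated integral over the partition $0=\tau_{0}^{\varepsilon,\delta}<\cdots<\tau_{m+1}^{\varepsilon,\delta}=1$ via Chen's identity and then run a pigeonhole argument (each block kills every one form except $\phi_{z_{j-1}}^{\varepsilon,\delta}$, and admissibility forbids two consecutive identical letters, so each block consumes at most one letter), which handles (1), (2) and (3) uniformly. The paper instead proceeds by induction on $m$, peeling off the last one form via $\left[\phi^{1},\ldots,\phi^{n}\right]_{0,1}=\int_{0}^{1}\left[\phi^{1},\ldots,\phi^{n-1}\right]_{0,t}\phi^{n}(dx_{t})$ and introducing an auxiliary path $\widetilde{x}$ that agrees with $x$ up to $\tau_{i}^{\varepsilon,\delta}$ and then parks in the tunnels, so that restricted iterated integrals become full ones to which the induction hypothesis applies. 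Your version is arguably cleaner and more symmetric; the paper's avoids having to state the full multi\-interval Chen decomposition. On the rough-path locality point that you flag as the main obstacle, the paper is no more careful than you are (it invokes it parenthetically); the cleanest justification is not via choosing bounded variation approximants avoiding a neighbourhood of the cube (which may be impossible if $x$ touches $\partial H_{z}^{\varepsilon,\delta}$), but via the local expansion of the rough integral: every term in the compensated Riemann sum involves $\phi_{z}^{\varepsilon,\delta}$ and its derivatives evaluated along $x$, and these all vanish off the open cube since $\phi_{z}^{\varepsilon,\delta}$ is smooth and identically zero there.

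There is, however, one genuine error in your ``key observation'': it is \emph{not} true that $x_{t}\in\overline{H_{z_{i-1}}^{\varepsilon,\delta}}$ for all $t\in[\tau_{i-1}^{\varepsilon,\delta},\tau_{i}^{\varepsilon,\delta}]$. The complement of $\bigcup_{z\neq z_{i-1}}H_{z}^{\varepsilon,\delta}$ is $\overline{H_{z_{i-1}}^{\varepsilon,\delta}}\cup T^{\varepsilon,\delta}$, and the tunnel set $T^{\varepsilon,\delta}$ is connected and connects all the cubes, so continuity gives you nothing: between visit times the path typically leaves the cube and wanders through the tunnels, possibly touching the boundaries of many other cubes. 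Consequently the justification you give for (3) --- that the support of $\phi_{z'_{i-1}}^{\varepsilon,\delta}$ must meet $x([\tau_{i-1}^{\varepsilon,\delta},\tau_{i}^{\varepsilon,\delta}])\subset\overline{H_{z_{i-1}}^{\varepsilon,\delta}}$, hence $z'_{i-1}=z_{i-1}$ by disjointness of closed cubes --- rests on a false premise. The repair is easy and in fact simpler: by the very definition of $\tau_{i}^{\varepsilon,\delta}$, on $[\tau_{i-1}^{\varepsilon,\delta},\tau_{i}^{\varepsilon,\delta})$ the path never enters the \emph{interior} of any cube $H_{z}^{\varepsilon,\delta}$ with $z\neq z_{i-1}$, and since $\phi_{z}^{\varepsilon,\delta}$ together with all its derivatives vanishes outside the open cube, any such one form integrates to zero over that block (the single endpoint $\tau_{i}^{\varepsilon,\delta}$ contributes nothing to the integral). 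With that substitution every step of your counting argument goes through unchanged.
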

\begin{proof}
We prove this result by induction on $m$. 

If $m=0,$ assume that $x\in E_{(z_{0})}^{\varepsilon,\delta}.$ Then
(1) and (3) are trivial. To see (2), let $w'=(z_{0},z'_{1},\cdots,z'_{n})\in\mathcal{W}_{n}$
with $n>0.$ Since $w'$ is an admissible word, there is some $0<k\leqslant n$
such that $x$ does not visit the open cube $H_{z'_{k}}^{\varepsilon,\delta}$
and the corresponding extended signature is zero by definition (here
we have implicitly used the definition of extended signatures of geometric
rough paths and the joint continuity of the integration map with respect
to the one forms and the driving path). If $m=1,$ assume that $w=(z_{0},z_{1})\in\mathcal{W}_{1}$
and $x\in E_{w}^{\varepsilon,\delta}.$ Then (3) follows by the same
argument as before. To see (1), first we have 
\begin{eqnarray*}
\left[\phi_{z_{0}}^{\varepsilon,\delta},\phi_{z_{1}}^{\varepsilon,\delta}\right]_{0,1}(x) & = & \int_{0}^{1}\left[\phi_{z_{0}}^{\varepsilon,\delta}\right]_{0,t}(x)\phi_{z_{1}}^{\varepsilon,\delta}(dx_{t})\\
 & = & \int_{\tau_{1}^{\varepsilon,\delta}}^{1}\left[\phi_{z_{0}}^{\varepsilon,\delta}\right]_{0,t}(x)\phi_{z_{1}}^{\varepsilon,\delta}(dx_{t}),
\end{eqnarray*}
since $\phi_{z_{1}}^{\varepsilon,\delta}$ is supported in $H_{z_{1}}^{\varepsilon,\delta}.$
Moreover, if $\tau_{1}^{\varepsilon,\delta}\leqslant t\leqslant1,$
then 
\[
\left[\phi_{z_{0}}^{\varepsilon,\delta}\right]_{0,t}(x)=\left[\phi_{z_{0}}^{\varepsilon,\delta}\right]_{0,\tau_{1}^{\varepsilon,\delta}}(x),
\]
since $\phi_{z_{0}}^{\varepsilon,\delta}$ is supported in $H_{z_{0}}^{\varepsilon,\delta}.$
Therefore,
\[
\left[\phi_{z_{0}}^{\varepsilon,\delta},\phi_{z_{1}}^{\varepsilon,\delta}\right]_{0,1}(x)=\left(\int_{0}^{\tau_{1}^{\varepsilon,\delta}}\phi_{z_{0}}^{\varepsilon,\delta}(dx_{t})\right)\left(\int_{\tau_{1}^{\varepsilon,\delta}}^{1}\phi_{z_{1}}^{\varepsilon,\delta}(dx_{t})\right)
\]
and (1) follows. If $w'=(z_{0},z'_{1},\cdots,z'_{n})\in\mathcal{W}_{n}$
with $n>1,$ there are two case. The first case is that there is some
$0<k\leqslant n$ such that $z'_{k}$ is different from $z_{0}$ and
$z_{1}.$ In this case (2) follows by the same argument as before.
The second case is 
\[
w'=(z_{0},z_{1},z_{0},z_{1},\cdots,z'_{n}),
\]
where $n>1$ and $z'_{n}$ is either $z_{0}$ or $z_{1}$. If $z'_{n}=z_{0}$,
then 
\[
\left[\phi_{z_{0}}^{\varepsilon,\delta},\cdots,\phi_{z'_{n}}^{\varepsilon,\delta}\right]_{0,1}(x)=\int_{0}^{\tau_{1}^{\varepsilon,\delta}}\left[\phi_{z_{0}}^{\varepsilon,\delta},\cdots,\phi_{z'_{n-1}=z_{1}}^{\varepsilon,\delta}\right]_{0,t}(x)\phi_{z_{0}}(dx_{t}).
\]
But during $\left[0,\tau_{1}^{\varepsilon,\delta}\right]$ the path
$x$ never visits the interior of $H_{z_{1}}^{\varepsilon,\delta}$,
so the integral on the R.H.S. is zero and hence the extended signature
corresponding to $w'$ is zero. If $z'_{n}=z_{1}$,
\[
\left[\phi_{z_{0}}^{\varepsilon,\delta},\cdots,\phi_{z'_{n}}^{\varepsilon,\delta}\right]_{0,1}(x)=\int_{\tau_{1}^{\varepsilon,\delta}}^{1}\left[\phi_{z_{0}}^{\varepsilon,\delta},\cdots,\phi_{z'_{n-1}=z_{0}}^{\varepsilon,\delta}\right]_{0,t}(x)\phi_{z_{1}}^{\varepsilon,\delta}(dx_{t}).
\]
For $\tau_{1}^{\varepsilon,\delta}\leqslant t\leqslant1,$ we have
\begin{align*}
 & \left[\phi_{z_{0}}^{\varepsilon,\delta},\cdots,\phi_{z'_{n-1}=z_{0}}^{\varepsilon,\delta}\right]_{0,t}(x)\\
= & \left[\phi_{z_{0}}^{\varepsilon,\delta},\cdots,\phi_{z'_{n-1}=z_{0}}^{\varepsilon,\delta}\right]_{0,\tau_{1}^{\varepsilon,\delta}}(x)+\int_{\tau_{1}^{\varepsilon,\delta}}^{1}\left[\phi_{z_{0}}^{\varepsilon,\delta},\cdots,\phi_{z'_{n-2}=z_{1}}^{\varepsilon,\delta}\right]_{0,t}(x)\phi_{z_{0}}^{\varepsilon,\delta}(dx_{t})\\
= & \left[\phi_{z_{0}}^{\varepsilon,\delta},\cdots,\phi_{z'_{n-1}=z_{0}}^{\varepsilon,\delta}\right]_{0,\tau_{1}^{\varepsilon,\delta}}(x).
\end{align*}
But during $\left[0,\tau_{1}^{\varepsilon,\delta}\right]$ the path
$x$ does not visit the interior of $H_{z_{1}}^{\varepsilon,\delta}$
and the last term contains the one form $\phi_{z_{1}}^{\varepsilon,\delta},$
thus it is zero and $\left[\phi_{z_{0}}^{\varepsilon,\delta},\cdots,\phi_{z'_{n}}^{\varepsilon,\delta}\right]_{0,1}(x)=0$.
Therefore (2) again follows. 

Now assume that the claim is true for all non negative integer less
than $m,$ we are going to show that it is true for $m.$ Let $w=(z_{0},\cdots,z_{m})\in\mathcal{W}_{m}$
and $x\in E_{w}^{\varepsilon,\delta}.$

We first show (1). In fact,
\begin{align*}
\left[\phi_{z_{0}}^{\varepsilon,\delta},\cdots,\phi_{z_{m}}^{\varepsilon,\delta}\right]_{0,1}(x)= & \int_{0}^{\tau_{m}^{\varepsilon,\delta}}\left[\phi_{z_{0}}^{\varepsilon,\delta},\cdots,\phi_{z_{m-1}}^{\varepsilon,\delta}\right]_{0,t}(x)\phi_{z_{m}}^{\varepsilon,\delta}(dx_{t})\\
 & +\int_{\tau_{m}^{\varepsilon,\delta}}^{1}\left[\phi_{z_{0}}^{\varepsilon,\delta},\cdots,\phi_{z_{m-1}}^{\varepsilon,\delta}\right]_{0,t}(x)\phi_{z_{m}}^{\varepsilon,\delta}(dx_{t})\\
= & \int_{0}^{\tau_{m}^{\varepsilon,\delta}}\left[\phi_{z_{0}}^{\varepsilon,\delta},\cdots,\phi_{z_{m-1}}^{\varepsilon,\delta}\right]_{0,t}(x)\phi_{z_{m}}^{\varepsilon,\delta}(dx_{t})\\
 & +\left[\phi_{z_{0}}^{\varepsilon,\delta},\cdots,\phi_{z_{m-1}}^{\varepsilon,\delta}\right]_{0,\tau_{m}^{\varepsilon,\delta}}(x)\int_{\tau_{m}^{\varepsilon,\delta}}^{1}\phi_{z_{m}}^{\varepsilon,\delta}(dx_{t}),
\end{align*}
where the last equality comes from the fact that 
\[
\left[\phi_{z_{0}}^{\varepsilon,\delta},\cdots,\phi_{z_{m-1}}^{\varepsilon,\delta}\right]_{0,t}(x)=\left[\phi_{z_{0}}^{\varepsilon,\delta},\cdots,\phi_{z_{m-1}}^{\varepsilon,\delta}\right]_{0,\tau_{m}^{\varepsilon,\delta}}(x),\ \forall t\in\left[\tau_{m}^{\varepsilon,\delta},1\right],
\]
since $z_{m-1}\neq z_{m}$ and hence during $\left[\tau_{m}^{\varepsilon,\delta},1\right]$
the path does not visit the interior of $H_{z_{m-1}}^{\varepsilon,\delta}.$
Now we want to use the induction hypothesis (1) on the term $\left[\phi_{z_{0}}^{\varepsilon,\delta},\cdots,\phi_{z_{m-1}}^{\varepsilon,\delta}\right]_{0,\tau_{m}^{\varepsilon,\delta}}(x)$.
To this end, let $\widetilde{x}$ be a path in $W$ such that $\widetilde{x}=x$
on $\left[0,\tau_{m}^{\varepsilon,\delta}\right]$ and $\widetilde{x}$
stays inside the tunnel on $\left[\tau_{m}^{\varepsilon,\delta},1\right]$.
It follows that $\widetilde{x}\in E_{\widetilde{w}}^{\varepsilon,\delta}$
where $\widetilde{w}=(z_{0},\cdots,z_{m-1})\in\mathcal{W}_{m-1},$
and 
\[
\left[\phi_{z_{0}}^{\varepsilon,\delta},\cdots,\phi_{z_{m-1}}^{\varepsilon,\delta}\right]_{0,\tau_{m}^{\varepsilon,\delta}}(x)=\left[\phi_{z_{0}}^{\varepsilon,\delta},\cdots,\phi_{z_{m-1}}^{\varepsilon,\delta}\right]_{0,1}(\widetilde{x}).
\]
Therefore, by the induction hypothesis (1) and the definition of $\widetilde{x}$
we have
\begin{eqnarray*}
\left[\phi_{z_{0}}^{\varepsilon,\delta},\cdots,\phi_{z_{m-1}}^{\varepsilon,\delta}\right]_{0,1}(\widetilde{x}) & = & (\prod_{i=1}^{m-1}\int_{\tau_{i-1}^{\varepsilon,\delta}}^{\tau_{i}^{\varepsilon,\delta}}\phi_{z_{i-1}}^{\varepsilon,\delta}(d\widetilde{x}_{t}))(\int_{\tau_{m-1}^{\varepsilon,\delta}}^{1}\phi_{z_{m-1}}^{\varepsilon,\delta}(d\widetilde{x}_{t}))\\
 & = & (\prod_{i=1}^{m-1}\int_{\tau_{i-1}^{\varepsilon,\delta}}^{\tau_{i}^{\varepsilon,\delta}}\phi_{z_{i-1}}^{\varepsilon,\delta}(dx_{t}))(\int_{\tau_{m-1}^{\varepsilon,\delta}}^{\tau_{m}^{\varepsilon,\delta}}\phi_{z_{m-1}}^{\varepsilon,\delta}(dx_{t})).
\end{eqnarray*}
Consequently (1) will follow once we show that 
\[
\int_{0}^{\tau_{m}^{\varepsilon,\delta}}\left[\phi_{z_{0}}^{\varepsilon,\delta},\cdots,\phi_{z_{m-1}}^{\varepsilon,\delta}\right]_{0,t}(x)\phi_{z_{m}}^{\varepsilon,\delta}(dx_{t})=0.
\]
But this is an easy consequence of the fact that 
\[
\int_{0}^{\tau_{m}^{\varepsilon,\delta}}\left[\phi_{z_{0}}^{\varepsilon,\delta},\cdots,\phi_{z_{m-1}}^{\varepsilon,\delta}\right]_{0,t}(x)\phi_{z_{m}}^{\varepsilon,\delta}(dx_{t})=\left[\phi_{z_{0}}^{\varepsilon,\delta},\cdots,\phi_{z_{m}}^{\varepsilon,\delta}\right]_{0,1}(\widetilde{x})
\]
and the induction hypothesis (2). 

Now we show (2). Let $w'=(z_{0},z'_{1},\cdots,z'_{n})\in\mathcal{W}_{n}$
with $n>m.$ As before, the case when there exists some $0<k\leqslant n$
such that $z'_{k}\notin\{z_{0},\cdots,z_{m}\}$ is trivial. Otherwise,
write
\begin{equation}
\left[\phi_{z_{0}}^{\varepsilon,\delta},\cdots,\phi_{z_{n}'}^{\varepsilon,\delta}\right]_{0,1}(x)=\sum_{i}\int_{\tau_{i-1}^{\varepsilon,\delta}}^{\tau_{i}^{\varepsilon,\delta}}\left[\phi_{z_{0}}^{\varepsilon,\delta},\cdots,\phi_{z_{n-1}'}^{\varepsilon,\delta}\right]_{0,t}(x)\phi_{z'_{n}}^{\varepsilon,\delta}(dx_{t}),\label{prove (2) show each term=00003D0 in the sum}
\end{equation}
where the sum is over those $i\leqslant m+1$ such that $z_{i-1}=z'_{n}$.
Since $z'_{n-1}\neq z'_{n},$ for each such $i$ we have
\begin{align*}
 & \int_{\tau_{i-1}^{\varepsilon,\delta}}^{\tau_{i}^{\varepsilon,\delta}}\left[\phi_{z_{0}}^{\varepsilon,\delta},\cdots,\phi_{z_{n-1}'}^{\varepsilon,\delta}\right]_{0,t}(x)\phi_{z_{n}'}^{\varepsilon,\delta}(dx_{t})\\
= & \left[\phi_{z_{0}}^{\varepsilon,\delta},\cdots,\phi_{z_{n-1}'}^{\varepsilon,\delta}\right]_{0,\tau_{i-1}^{\varepsilon,\delta}}(x)\int_{\tau_{i-1}^{\varepsilon,\delta}}^{\tau_{i}^{\varepsilon,\delta}}\phi_{z'_{n}}^{\varepsilon,\delta}(dx_{t}).
\end{align*}
Define a new path $\widetilde{x}\in W$ such that $\widetilde{x}=x$
on $\left[0,\tau_{i-1}^{\varepsilon,\delta}\right]$ and $\widetilde{x}$
stays inside the tunnel on $\left[\tau_{i-1}^{\varepsilon,\delta},1\right].$
Then $\widetilde{x}\in E_{\widetilde{w}}^{\varepsilon,\delta}$ with
$\widetilde{w}=(z_{0},\cdots,z_{i-2}).$ Since during $\left[\tau_{i-1}^{\varepsilon,\delta},1\right]$
the path $\widetilde{x}$ does not visit the interior of $H_{z'_{n}}^{\varepsilon,\delta}$,
we have 
\[
\left[\phi_{z_{0}}^{\varepsilon,\delta},\cdots,\phi_{z_{n-1}'}^{\varepsilon,\delta}\right]_{0,\tau_{i-1}^{\varepsilon,\delta}}(x)=\left[\phi_{z_{0}}^{\varepsilon,\delta},\cdots,\phi_{z_{n-1}'}^{\varepsilon,\delta}\right]_{0,1}(\widetilde{x}).
\]
Now observe that $i-2<m\leqslant n-1,$ and so by the induction hypothesis
(2) we know that 
\[
\left[\phi_{z_{0}}^{\varepsilon,\delta},\cdots,\phi_{z_{n-1}'}^{\varepsilon,\delta}\right]_{0,1}(\widetilde{x})=0.
\]
Therefore, each term in the R.H.S. is zero and (2) follows.

Finally we show (3). Let $w'=(z_{0},z'_{1},\cdots,z'_{m})\in\mathcal{W}_{m}$
with $w'\neq w.$ If $z'_{m}=z_{m}$, then 
\begin{align*}
 & \left[\phi_{z_{0}}^{\varepsilon,\delta},\cdots,\phi_{z'_{m}}^{\varepsilon,\delta}\right]_{0,1}(x)\\
= & \left[\phi_{z_{0}}^{\varepsilon,\delta},\cdots,\phi_{z'_{m}}^{\varepsilon,\delta}\right]_{0,\tau_{m}^{\varepsilon,\delta}}(x)+\left[\phi_{z_{0}}^{\varepsilon,\delta},\cdots,\phi_{z_{m-1}'}^{\varepsilon,\delta}\right]_{0,\tau_{m}^{\varepsilon,\delta}}(x)\int_{\tau_{m}^{\varepsilon,\delta}}^{1}\phi_{z_{m}}^{\varepsilon,\delta}(dx_{t}).
\end{align*}
Define $\widetilde{x}\in W$ by $\widetilde{x}=x$ on $\left[0,\tau_{m}^{\varepsilon,\delta}\right]$
and staying inside the tunnel on $\left[\tau_{m}^{\varepsilon,\delta},1\right]$.
It follows from the induction hypothesis (2) that 
\[
\left[\phi_{z_{0}}^{\varepsilon,\delta},\cdots,\phi_{z'_{m}}^{\varepsilon,\delta}\right]_{0,\tau_{m}^{\varepsilon,\delta}}(x)=\left[\phi_{z_{0}}^{\varepsilon,\delta},\cdots,\phi_{z'_{m}}^{\varepsilon,\delta}\right]_{0,1}(\widetilde{x})=0.
\]
Moreover, in this case we know that $(z_{0},\cdots,z'_{m-1})\neq(z_{0},\cdots,z_{m-1})$.
Therefore, by induction hypothesis (3) we have 
\[
\left[\phi_{z_{0}}^{\varepsilon,\delta},\cdots,\phi_{z_{m-1}'}^{\varepsilon,\delta}\right]_{0,\tau_{m}^{\varepsilon,\delta}}(x)=\left[\phi_{z_{0}}^{\varepsilon,\delta},\cdots,\phi_{z_{m-1}'}^{\varepsilon,\delta}\right]_{0,1}(\widetilde{x})=0.
\]
Consequently (3) follows. For the case $z'_{m}\neq z_{m}$ and there
exists some $i\leqslant m+1$ with $z_{i-1}=z'_{m}$ (otherwise it
is trivial), we know that $i$ must be strictly less than $m-1.$
By writing $\left[\phi_{z_{0}}^{\varepsilon,\delta},\cdots,\phi_{z'_{m}}^{\varepsilon,\delta}\right]_{0,1}(x)$
as a sum of the form (\ref{prove (2) show each term=00003D0 in the sum}),
the result (3) will follow easily from the induction hypothesis (2)
by a similar argument.

Now the proof is complete.
\end{proof}

Define a map $M^{\varepsilon,\delta}:\ W\rightarrow\mathbb{Z}_{+}$
by sending a path $x\in W$ to 
\[
\sup\left\{ m\geqslant0:\ \exists w=(z_{0},z_{1},\cdots,z_{m})\in\mathcal{W}_{m}\ \mathrm{s.t.}\ \left[\phi_{z_{0}}^{\varepsilon,\delta},\phi_{z_{1}}^{\varepsilon,\delta},\cdots,\phi_{z_{m}}^{\varepsilon,\delta}\right]_{0,1}(x)\neq0\right\} .
\]
Note that by Lemma \ref{recovering the ordered sequence of boxes},
$M^{\varepsilon,\delta}\leqslant N^{\varepsilon,\delta}$ for $\mathbb{P}$-almost
surely. Moreover, we are able to prove the following recovery result.
\begin{prop}
\label{unique nonzero}For each $x\in W$ outside a $\mathbb{P}$-null
set, there exists a unique word $w=(z_{0},\cdots,z_{M^{\varepsilon,\delta}(x)})\in\mathcal{W}_{M^{\varepsilon,\delta}(x)}$
such that 
\[
\left[\phi_{z_{0}}^{\varepsilon,\delta},\cdots,\phi_{z_{M^{\varepsilon,\delta}(x)}}^{\varepsilon,\delta}\right]_{0,1}(x)\neq0.
\]
This word is exactly given by $M^{\varepsilon,\delta}(x)=N^{\varepsilon,\delta}(x),$
and 
\[
z_{i}=\boldsymbol{m}_{i}^{\varepsilon,\delta}(x),\ i=0,\cdots,M^{\varepsilon,\delta}(x).
\]
\end{prop}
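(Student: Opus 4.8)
The plan is to reduce the statement, via Lemma~\ref{recovering the ordered sequence of boxes}, to a single positivity assertion and then to extract that assertion from Assumption~(C) after removing the randomness of the hitting times $\tau_k^{\varepsilon,\delta}$. Fix $x\in E_w^{\varepsilon,\delta}$ for some $w=(z_0=0,z_1,\dots,z_m)\in\mathcal{W}_m$, so that $N^{\varepsilon,\delta}(x)=m$ and $\boldsymbol{m}_k^{\varepsilon,\delta}(x)=z_k$. Part~(2) of the lemma already gives $M^{\varepsilon,\delta}(x)\leqslant m$ on $\mathcal{N}_0^{c}$, and part~(3) shows that among words of length exactly $m$ only $w$ itself can carry a nonzero extended signature (recall that every admissible word begins at $0$). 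Part~(1) expresses the extended signature along the true word as $\prod_{i=1}^{m+1}\int_{\tau_{i-1}^{\varepsilon,\delta}}^{\tau_{i}^{\varepsilon,\delta}}\phi_{z_{i-1}}^{\varepsilon,\delta}(dx_t)$. So the whole proposition follows once I produce a $\mathbb{P}$-null set $\mathcal{N}\supseteq\mathcal{N}_0$ off which each factor $\int_{\tau_{i-1}^{\varepsilon,\delta}}^{\tau_{i}^{\varepsilon,\delta}}\phi_{z_{i-1}}^{\varepsilon,\delta}(dx_t)$ is nonzero: then $M^{\varepsilon,\delta}(x)\geqslant m=N^{\varepsilon,\delta}(x)$, forcing $M^{\varepsilon,\delta}(x)=N^{\varepsilon,\delta}(x)$, and the unique maximising word is $w=(\boldsymbol{m}_0^{\varepsilon,\delta}(x),\dots,\boldsymbol{m}_m^{\varepsilon,\delta}(x))$ by part~(3).

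To build $\mathcal{N}$ I would set
\[
\mathcal{N}:=\mathcal{N}_0\cup\bigcup_{z\in\mathbb{Z}^d}\ \bigcup_{\substack{q_1,q_2\in\mathbb{Q}\cap[0,1]\\ q_1<q_2}}\left(\left\{x\in W:\ \int_{q_1}^{q_2}\phi_z^{\varepsilon,\delta}(dx_t)=0\right\}\cap A_{q_1,q_2}^{H_z^{\varepsilon,\delta}}\right),
\]
which is a countable union of $\mathbb{P}$-null sets by Assumptions~(A) and~(C), hence itself $\mathbb{P}$-null. The key point is then that for $x\in\mathcal{N}^{c}$ and each $i\in\{1,\dots,m+1\}$ the random integral over $[\tau_{i-1}^{\varepsilon,\delta},\tau_{i}^{\varepsilon,\delta}]$ agrees with a deterministic one: choose rationals $q_1^{(i)}\in(\tau_{i-2}^{\varepsilon,\delta},\tau_{i-1}^{\varepsilon,\delta})$ for $i\geqslant2$ and $q_2^{(i)}\in(\tau_{i}^{\varepsilon,\delta},\tau_{i+1}^{\varepsilon,\delta})$ for $i\leqslant m$, and set $q_1^{(1)}=\tau_0^{\varepsilon,\delta}=0$, $q_2^{(m+1)}=\tau_{m+1}^{\varepsilon,\delta}=1$; these intervals are nonempty because the $\tau_k^{\varepsilon,\delta}$ strictly increase while they are $<1$. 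Now $\phi_z^{\varepsilon,\delta}$ is supported on $\overline{H_z^{\varepsilon,\delta}}$ and vanishes on $\partial H_z^{\varepsilon,\delta}$, so it vanishes identically on the closed set $\mathbb{R}^d\setminus H_z^{\varepsilon,\delta}$; and from the definitions of $\tau_{i-1}^{\varepsilon,\delta}$ and $\tau_{i+1}^{\varepsilon,\delta}$ together with $z_{i-1}\neq z_{i-2}$ and $z_{i-1}\neq z_i$, the sample path stays outside $H_{z_{i-1}}^{\varepsilon,\delta}$ throughout $[q_1^{(i)},\tau_{i-1}^{\varepsilon,\delta}]$ (for $i\geqslant2$) and throughout $[\tau_{i}^{\varepsilon,\delta},q_2^{(i)}]$ (for $i\leqslant m$), while the two extreme pieces (the head when $i=1$ and the tail when $i=m+1$) collapse to a single point. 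In every case the rough integral of $\phi_{z_{i-1}}^{\varepsilon,\delta}$ over the head and tail pieces vanishes, so additivity of the rough integral over adjacent intervals gives $\int_{q_1^{(i)}}^{q_2^{(i)}}\phi_{z_{i-1}}^{\varepsilon,\delta}(dx_t)=\int_{\tau_{i-1}^{\varepsilon,\delta}}^{\tau_{i}^{\varepsilon,\delta}}\phi_{z_{i-1}}^{\varepsilon,\delta}(dx_t)$.

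To finish, observe that since $x\in E_w^{\varepsilon,\delta}$ the path enters the interior of $H_{z_{i-1}}^{\varepsilon,\delta}$ at some time in $(\tau_{i-1}^{\varepsilon,\delta},\tau_{i}^{\varepsilon,\delta})\subset(q_1^{(i)},q_2^{(i)})$ --- this is essentially the definition of $\boldsymbol{m}_{i-1}^{\varepsilon,\delta}$ --- so $x\in A_{q_1^{(i)},q_2^{(i)}}^{H_{z_{i-1}}^{\varepsilon,\delta}}$; since $x\notin\mathcal{N}$ we conclude $\int_{q_1^{(i)}}^{q_2^{(i)}}\phi_{z_{i-1}}^{\varepsilon,\delta}(dx_t)\neq0$, and therefore $\int_{\tau_{i-1}^{\varepsilon,\delta}}^{\tau_{i}^{\varepsilon,\delta}}\phi_{z_{i-1}}^{\varepsilon,\delta}(dx_t)\neq0$, which is exactly the positivity needed above. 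I expect the main obstacle to be precisely this replacement of a random interval by a deterministic rational one: it forces one to pin down where the sample path lies relative to $H_{z_{i-1}}^{\varepsilon,\delta}$ near the random endpoints $\tau_{i-1}^{\varepsilon,\delta}$ and $\tau_{i}^{\varepsilon,\delta}$ --- in particular that at those instants the path lies on a cube face and hence outside $H_{z_{i-1}}^{\varepsilon,\delta}$ --- so that the two boundary contributions truly drop out; once that is settled, the rest follows formally from Lemma~\ref{recovering the ordered sequence of boxes}.
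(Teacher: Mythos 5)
Your proposal is correct and follows essentially the same route as the paper: both reduce the claim to the nonvanishing of each factor in the product formula of Lemma \ref{recovering the ordered sequence of boxes}(1), both build the exceptional null set as a countable union over cubes and rational time pairs of the sets appearing in Assumption (C), and both use the support of $\phi_{z_{i-1}}^{\varepsilon,\delta}$ together with the definition of the hitting times to replace the random interval $[\tau_{i-1}^{\varepsilon,\delta},\tau_{i}^{\varepsilon,\delta}]$ by a deterministic rational one without changing the integral. The only cosmetic differences are that you argue directly rather than by contradiction and enclose the random interval from outside (choosing $q_{2}^{(i)}\in(\tau_{i}^{\varepsilon,\delta},\tau_{i+1}^{\varepsilon,\delta})$) where the paper shrinks it from inside (choosing $r_{2}<\tau_{i}^{\varepsilon,\delta}$); both choices are justified by the same observation that the path avoids the open cube $H_{z_{i-1}}^{\varepsilon,\delta}$ on the discrepancy pieces.
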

\begin{proof}
Let $\mathcal{N}^{\varepsilon,\delta}$ be the set 
\[
\bigcup_{m=0}^{\infty}\bigcup_{w=(z_{0},\cdots,z_{m})\in\mathcal{W}_{m}}\bigcup_{i=0}^{m}\bigcup_{\substack{0\leqslant r_{1}<r_{2}\leqslant1\\
r_{1},r_{2}\in\mathbb{Q}
}
}\left(\left\{ x\in W:\ \int_{r_{1}}^{r_{2}}\phi_{z_{i}}^{\varepsilon,\delta}(dx_{u})=0\right\} \bigcap A_{r_{1},r_{2}}^{z_{i},\varepsilon,\delta}\right),
\]
where $A_{r_{1},r_{2}}^{z_{i},\varepsilon,\delta}$ is the set defined
in (\ref{travel through the interior}) associated with the cube $H_{z_{i}}^{\varepsilon,\delta}$
and the differential one form $\phi_{z_{i}}^{\varepsilon,\delta}.$
It follows from Assumption (C) that $\mathcal{N}^{\varepsilon,\delta}$
is a $\mathbb{P}$-null set. 

For any $x\in(\mathcal{N}^{\varepsilon,\delta})^{c},$ let $w=(z_{0},\cdots,z_{m})$
be the word in $\mathcal{W}_{m}$ with $m=N^{\varepsilon,\delta}$
and $z_{i}=\boldsymbol{m}_{i}^{\varepsilon,\delta}$, for $i=0,\ldots,m$,
so $x\in E_{w}^{\varepsilon,\delta}$. 

By (\ref{breaking into product}) in Lemma \ref{recovering the ordered sequence of boxes},
if $\left[\phi_{z_{0}}^{\varepsilon,\delta},\cdots,\phi_{z_{m}}^{\varepsilon,\delta}\right]_{0,1}(x)=0$,
then there exists some $i=1,\cdots,m+1$ such that $\int_{\tau_{i-1}^{\varepsilon,\delta}}^{\tau_{i}^{\varepsilon,\delta}}\phi_{z_{i-1}}^{\varepsilon,\delta}(dx_{t})=0.$
By the definition of $\tau_{k}^{\varepsilon,\delta}$ and continuity,
we can find some rational numbers $r_{1}<\tau_{i-1}^{\varepsilon,\delta}$
and $r_{2}<\tau_{i}^{\varepsilon,\delta}$ (if $m=0$ take $r_{1}=0$
and $r_{2}=1$; otherwise if $i=1,$ take $r_{1}=0$ and if $i=m+1,$
take $r_{2}=1$) such that there exists some $u\in(r_{1},r_{2})$
with $x_{u}\in H_{z_{i-1}}^{\varepsilon,\delta}$ and 
\[
\int_{r_{1}}^{r_{2}}\phi_{z_{i-1}}^{\varepsilon,\delta}(dx_{t})=\int_{\tau_{i-1}^{\varepsilon,\delta}}^{\tau_{i}^{\varepsilon,\delta}}\phi_{z_{i-1}}^{\varepsilon,\delta}(dx_{t})=0.
\]
This implies that $x\in\mathcal{N}^{\varepsilon,\delta},$ which is
a contradiction. Therefore, we have $\left[\phi_{z_{0}}^{\varepsilon,\delta},\cdots,\phi_{z_{m}}^{\varepsilon,\delta}\right]_{0,1}(x)\neq0.$ 

By the second and third part of Lemma \ref{recovering the ordered sequence of boxes},
we know that $M^{\varepsilon,\delta}(x)=m$ and $ $$w$ is the unique
word in $\mathcal{W}_{m}$ such that the corresponding extended signature
of $x$ is nonzero.
\end{proof}

Together with the result in Section 4, proposition \ref{unique nonzero}
tells us that outside a $\mathbb{P}$-null set, given the signature
of a path $x$ we can recover the sequence of open cubes $H_{z}^{\varepsilon,\delta}$
which $x$ has visited in order.

\subsection{An Approximation Result}

Now we are going to construct a polygonal approximation of $ $a path
based on the ordered sequence of open cubes visited by the path and
the corresponding visit times. With probability one, such polygonal
approximation converges to the original path under the uniform topology.
This result is crucial for the recovery of a path up to reparametrization
from its signature.

Let $x\in W$ and define the word $w=(z_{0},\cdots,z_{m})\in\mathcal{W}_{m}$
by $m=N^{\varepsilon,\delta}$ and $z_{i}=\boldsymbol{m}_{i}^{\varepsilon,\delta}$
for $i=0,\cdots,m$. Construct a polygonal path $x^{\varepsilon,\delta}$
as follows. If $m=0,$ let $x_{t}^{\varepsilon,\delta}=0$ for $t\in[0,1]$;
otherwise for $1\leqslant k\leqslant m,$ define
\[
x_{t}^{\varepsilon,\delta}=\frac{\tau_{k}^{\varepsilon,\delta}-t}{\tau_{k}^{\varepsilon,\delta}-\tau_{k-1}^{\varepsilon,\delta}}\varepsilon z_{k-1}+\frac{t-\tau_{k-1}^{\varepsilon,\delta}}{\tau_{k}^{\varepsilon,\delta}-\tau_{k-1}^{\varepsilon,\delta}}\varepsilon z_{k},\ t\in\left[\tau_{k-1}^{\varepsilon,\delta},\tau_{k}^{\varepsilon,\delta}\right],
\]
and 
\[
x_{t}^{\varepsilon,\delta}=\varepsilon z_{m},\ t\in\left[\tau_{m}^{\varepsilon,\delta},1\right].
\]
The approximation scheme is illustrated by Figure 1.

\begin{figure}
\begin{center}

\includegraphics[scale=0.45]{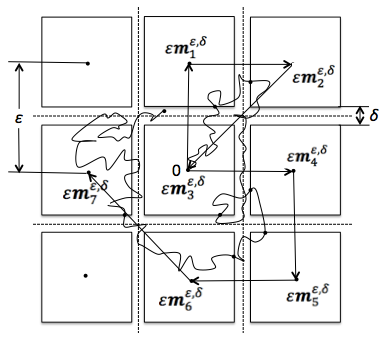}

\caption{This figure illustrates the corresponding approximation scheme. The
dotted lines represent the degenerate tunnels. According to Assumption
(B) on the process, the probability that a path stays in these tunnels
for a positive time period is zero, a crucial fact used in the proof
of Proposition \ref{prop:approximation}.}

\end{center}
\end{figure}

Now we have the following approximation result.
\begin{prop}
\label{prop:approximation}For each $n\geqslant1$ and $\varepsilon_{n}=1/n$,
there exists $\delta_{n}>0,$ such that for $\mathbb{P}$-almost surely,
\begin{equation}
\lim_{n\rightarrow\infty}\sup_{0\leqslant t\leqslant1}\left|x_{t}^{\varepsilon_{n},\delta_{n}}-x_{t}\right|=0.\label{uniform convergence}
\end{equation}
\end{prop}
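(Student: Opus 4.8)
The plan is to prove the uniform convergence by showing that with probability one the polygonal path $x^{\varepsilon,\delta}$ stays uniformly close to $x$ once $\varepsilon$ is small, provided $\delta$ is chosen small enough depending on $\varepsilon$ (and on the path, but via a countable reduction on the null set). The geometric heart of the matter is that, by construction, the vertices $\varepsilon z_0,\dots,\varepsilon z_m$ of $x^{\varepsilon,\delta}$ are the centres of the cubes successively visited by $x$, and at time $\tau_k^{\varepsilon,\delta}$ the path $x$ itself lies in the cube $H_{z_k}^{\varepsilon,\delta}$, hence within distance $\sqrt d\,\varepsilon/2$ of the vertex $x^{\varepsilon,\delta}_{\tau_k^{\varepsilon,\delta}}=\varepsilon z_k$. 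So the two paths agree (up to $O(\sqrt d\,\varepsilon)$) at the visit times $\tau_k^{\varepsilon,\delta}$; the issue is to control them in between consecutive visit times, i.e. to bound $\sup_{t\in[\tau_{k-1}^{\varepsilon,\delta},\tau_k^{\varepsilon,\delta}]}|x_t-x^{\varepsilon,\delta}_t|$ and the final flat piece on $[\tau_m^{\varepsilon,\delta},1]$.

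First I would record the deterministic estimates. On $[\tau_{k-1}^{\varepsilon,\delta},\tau_k^{\varepsilon,\delta}]$ the polygonal path moves linearly between $\varepsilon z_{k-1}$ and $\varepsilon z_k$, two centres of cubes, so it stays within distance $|\varepsilon z_{k-1}-\varepsilon z_k|+\sqrt d\,\varepsilon/2$ of either endpoint; meanwhile during this same time interval the path $x$ has not entered any open cube other than $H_{z_{k-1}}^{\varepsilon,\delta}$, so $x$ lies in the closed set $\mathbb{R}^d\setminus\bigcup_{z\ne z_{k-1}}H_z^{\varepsilon,\delta}$, which is the union of the cube $H_{z_{k-1}}^{\varepsilon,\delta}$ and the tunnels. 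The key point: the only way $x$ can be far from $\varepsilon z_{k-1}$ during $[\tau_{k-1}^{\varepsilon,\delta},\tau_k^{\varepsilon,\delta})$ is by travelling a long way inside the tunnel system without entering another cube. Let $\rho^{\varepsilon,\delta}(x)$ denote the supremum over all time subintervals $[a,b]$ during which $x$ remains in the tunnels (outside every open cube) of $\mathrm{diam}\,x([a,b])$. The deterministic estimate is then $\sup_{0\le t\le 1}|x_t-x^{\varepsilon,\delta}_t|\le C\big(\sqrt d\,\varepsilon+\rho^{\varepsilon,\delta}(x)\big)$ for an absolute constant $C$: between visit times both paths are pinned near the same cube centre up to $O(\varepsilon)$, with any extra deviation of $x$ controlled by a tunnel-excursion of diameter at most $\rho^{\varepsilon,\delta}(x)$; on the final piece $[\tau_m^{\varepsilon,\delta},1]$ the path $x$ never enters a new cube, so it also satisfies the tunnel bound relative to $\varepsilon z_m$.

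So everything reduces to: for $\varepsilon_n=1/n$ one can choose $\delta_n\downarrow 0$ so that $\rho^{\varepsilon_n,\delta_n}(x)\to 0$ almost surely. Fix $\varepsilon=\varepsilon_n$. As $\delta\downarrow 0$ the tunnels shrink to the hyperplane grid $\bigcup_i\{x^i\in\varepsilon\mathbb{Z}\}$, a set of Lebesgue measure zero, and the set of times a continuous path spends on that grid is — by Assumption (B) and Fubini — almost surely of Lebesgue measure zero for each coordinate hyperplane, hence the set $Z(x)=\{t:x_t\in\bigcup_i\varepsilon\mathbb{Z}\text{-grid}\}$ has empty interior for $\mathbb{P}$-a.e. $x$ (indeed its closure has measure zero by continuity). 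For such $x$, uniform continuity of $x$ gives: for every $\eta>0$ there is $\delta=\delta(\eta,\varepsilon,x)>0$ such that any time interval on which $x$ stays in the $\delta$-neighbourhood of the grid has diameter of image $<\eta$; otherwise one extracts (by compactness) a nondegenerate interval on which $x$ sits exactly on the grid, contradicting $Z(x)$ having empty interior. This gives $\rho^{\varepsilon,\delta}(x)<\eta$ for $\delta$ small. The remaining task is to remove the dependence of $\delta$ on $x$: since $\varepsilon_n$ ranges over a countable set, it suffices to pick, for each $n$, a single $\delta_n$ that works off a $\mathbb{P}$-null set; this is achieved by a standard argument — the function $\delta\mapsto\mathbb{P}(\{x:\rho^{\varepsilon_n,\delta}(x)\ge 1/n\})$ is nonincreasing in $\delta$ and tends to $0$ as $\delta\downarrow 0$ (by the a.s.\ statement just proved and dominated convergence), so choose $\delta_n$ with this probability $<2^{-n}$ and apply Borel--Cantelli. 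Then $\rho^{\varepsilon_n,\delta_n}(x)<1/n$ for all large $n$, $\mathbb{P}$-a.s., and combined with the deterministic estimate this yields \eqref{uniform convergence}.

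The main obstacle is the tunnel-excursion control, i.e.\ proving $\rho^{\varepsilon_n,\delta_n}(x)\to0$; this is exactly where Assumption (B) is used and it is the only genuinely probabilistic input of the proposition. The delicate point is that $\rho^{\varepsilon,\delta}(x)$ is \emph{not} continuous in $x$ and the natural argument produces a $\delta$ depending on the path, so one must carefully pass to a path-independent $\delta_n$ via a countable/Borel--Cantelli reduction rather than trying to get it pathwise. The deterministic geometric estimates, though slightly fiddly to write out (one has to treat the initial segment before $\tau_1^{\varepsilon,\delta}$, the generic segments, the final flat segment, and the possibility $m=0$ separately), are routine once the role of $\rho^{\varepsilon,\delta}$ is isolated.
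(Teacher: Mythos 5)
Your proposal is correct and follows essentially the same route as the paper: your tunnel-excursion quantity $\rho^{\varepsilon,\delta}$ and the deterministic bound $\sup_t|x_t-x_t^{\varepsilon,\delta}|\leqslant C(\sqrt{d}\,\varepsilon+\rho^{\varepsilon,\delta}(x))$ correspond exactly to the paper's event $A^{\varepsilon,\delta}$ and its inclusion $\{\sup_u|x_u^{\varepsilon,\delta}-x_u|\geqslant 11\sqrt{d}\,\varepsilon\}\subset A^{\varepsilon,\delta}$. The compactness argument reducing $\bigcap_{\delta>0}A^{\varepsilon,\delta}$ to the Assumption (B)-null event that $x$ meets a grid hyperplane at a rational time, and the path-independent choice of $\delta_n$ with $\mathbb{P}(A^{\varepsilon_n,\delta_n})$ summable followed by Borel--Cantelli, are likewise the paper's argument.
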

\begin{proof}
For each $\varepsilon,\delta,$ let 
\[
T^{\varepsilon,\delta}=\mathbb{R}^{d}\backslash\bigcup_{z\in\mathbb{Z}^{d}}H_{z}^{\varepsilon,\delta}
\]
be the set of closed tunnels, and define 
\[
A^{\varepsilon,\delta}=\left\{ x\in W:\ \exists\left[s,t\right]\subset x^{-1}\left(T^{\varepsilon,\delta}\right),\left|x_{t}-x_{s}\right|\geqslant\varepsilon\right\} .
\]

We first show that for any fixed $\varepsilon>0,$
\begin{align}
\bigcap_{\delta>0}A^{\varepsilon,\delta}\subset & \left\{ x\in W:\ \exists1\leqslant i\leqslant d,k\in\mathbb{Z},q\in\mathbb{Q}\bigcap(0,1)\ \mathrm{s.t.}\; x_{q}^{i}=\frac{2k-1}{2}\varepsilon\right\} .\label{eq:narrow tunnel}
\end{align}
Let $x\in\bigcap_{\delta>0}A^{\varepsilon,\delta}$, and $\delta_{n}$
be a sequence such that $\delta_{n}\downarrow0$. Then for each $n\geqslant1$,
there exists $0\leqslant s_{n}<t_{n}\leqslant1$ such that $\left[s_{n},t_{n}\right]\subset x^{-1}\left(T^{\varepsilon,\delta_{n}}\right)\;\mbox{and }\left|x_{t_{n}}-x_{s_{n}}\right|\geqslant\varepsilon.$
By compactness we can find a subsequence $(s_{n_{l}},t_{n_{l}})$
of $(s_{n},t_{n})$ such that $(s_{n_{l}},t_{n_{l}})$ converges to
some $\left(s,t\right)$. The condition $\left|x_{t_{n_{l}}}-x_{s_{n_{l}}}\right|\geqslant\varepsilon$
then implies that $s<t$. Therefore, for fixed $u,v$ with $s<u<v<t$,
there exists some $N\in\mathbb{N}$ such that $\left[u,v\right]\subset\bigcap_{l\geqslant N}[s_{n_{l}},t_{n_{l}}]$,
and hence 
\begin{eqnarray*}
\left[u,v\right] & \subset & \bigcap_{l\geqslant N}x^{-1}\left(T^{\varepsilon,\delta_{n_{l}}}\right)\\
 & = & x^{-1}\left(\bigcup_{k\in\mathbb{Z}}\bigcup_{1\leqslant i\leqslant d}\mathbb{R}^{i-1}\times\left\{ \frac{2k-1}{2}\varepsilon\right\} \times\mathbb{R}^{d-i}\right).
\end{eqnarray*}
In particular, this implies (\ref{eq:narrow tunnel}) and by Assumption
(B) we have $\mathbb{P}\left(\bigcap_{\delta>0}A^{\varepsilon,\delta}\right)=0$. 

Now we are going to show that for each $\varepsilon,\delta,$ 
\begin{equation}
\left\{ x\in W:\ \sup_{0\leqslant u\leqslant1}\left|x_{u}^{\varepsilon,\delta}-x_{u}\right|\geqslant11\sqrt{d}\varepsilon\right\} \subset A^{\varepsilon,\delta}.\label{travelling through tunnels}
\end{equation}
To see this, first notice that if $x$ belongs to the left hand side
of (\ref{travelling through tunnels}), then either

(1) there exists some $u\in[\tau_{k-1}^{\varepsilon,\delta},\tau_{k}^{\varepsilon,\delta}]$
for some $1\leqslant k\leqslant N^{\varepsilon,\delta},$ such that
$\left|x_{u}^{\varepsilon,\delta}-x_{u}\right|\geqslant11\sqrt{d}\varepsilon$;
or

(2) there exists some $u\in[\tau_{N^{\varepsilon,\delta}}^{\varepsilon,\delta},1],$
such that $\left|x_{u}-\varepsilon\boldsymbol{m}_{N^{\varepsilon,\delta}}^{\varepsilon,\delta}\right|\geqslant11\sqrt{d}\varepsilon.$

In the first case, we know that $x$ does not visit any cube other
than $H_{\boldsymbol{m}_{k-1}^{\varepsilon,\delta}}^{\varepsilon,\delta}$
during $(\tau_{k-1}^{\varepsilon,\delta},\tau_{k}^{\varepsilon,\delta})$.
If the distance between the cubes $H_{\boldsymbol{m}_{k}^{\varepsilon,\delta}}^{\varepsilon,\delta}$
and $H_{\boldsymbol{m}_{k-1}^{\varepsilon,\delta}}^{\varepsilon,\delta}$
is at least $3\sqrt{d}\varepsilon$, by continuity there exist $\tau_{k-1}^{\varepsilon,\delta}<s<t<\tau_{k}^{\varepsilon,\delta}$,
such that 
\[
\left|x_{s}-x_{\tau_{k-1}^{\varepsilon,\delta}}\right|=\sqrt{d}\varepsilon,\;\left|x_{t}-x_{\tau_{k-1}^{\varepsilon,\delta}}\right|=2\sqrt{d}\varepsilon,
\]
and $[s,t]\subset x^{-1}\left(T^{\varepsilon,\delta}\right).$ Moreover,
by the triangle inequality we have $|x_{t}-x_{s}|\geqslant\varepsilon$.
Therefore, $x\in A^{\varepsilon,\delta}$. If the distance between
$H_{\boldsymbol{m}_{k}^{\varepsilon,\delta}}^{\varepsilon,\delta}$
and $H_{\boldsymbol{m}_{k-1}^{\varepsilon,\delta}}^{\varepsilon,\delta}$
is strictly less than $3\sqrt{d}\varepsilon$, we know that $\left|x_{u}^{\varepsilon,\delta}-\varepsilon\boldsymbol{m}_{k-1}^{\varepsilon,\delta}\right|\leqslant4\sqrt{d}\varepsilon$
for all $u\in\left(\tau_{k-1}^{\varepsilon,\delta},\tau_{k}^{\varepsilon,\delta}\right)$.
Since $\sup_{0\leqslant u\leqslant1}\left|x_{u}^{\varepsilon,\delta}-x_{u}\right|\geqslant11\sqrt{d}\varepsilon,$
there exists $u\in\left(\tau_{k-1}^{\varepsilon,\delta},\tau_{k}^{\varepsilon,\delta}\right)$
such that 
\[
\left|x_{u}-\varepsilon\boldsymbol{m}_{k-1}^{\varepsilon,\delta}\right|,\ \left|x_{u}-\varepsilon\boldsymbol{m}_{k}^{\varepsilon,\delta}\right|\geqslant7\sqrt{d}\varepsilon.
\]
It follows again from continuity that there exist $\tau_{k-1}^{\varepsilon,\delta}<s<t<\tau_{k}^{\varepsilon,\delta}$
such that 
\[
\left|x_{s}-\varepsilon\boldsymbol{m}_{k-1}^{\varepsilon,\delta}\right|=5\sqrt{d}\varepsilon,\;\left|x_{t}-\varepsilon\boldsymbol{m}_{k-1}^{\varepsilon,\delta}\right|=6\sqrt{d}\varepsilon,
\]
and $[s,t]\subset x^{-1}\left(T^{\varepsilon,\delta}\right).$ Therefore,
$\left|x_{s}-x_{t}\right|\geqslant\varepsilon$ and we have $x\in A^{\varepsilon,\delta}.$ 

In the second case, there exist $\tau_{N^{\varepsilon,\delta}}^{\varepsilon,\delta}<s<t\leqslant1$
such that 
\[
\left|x_{s}-\varepsilon\boldsymbol{m}_{N^{\varepsilon,\delta}}^{\varepsilon,\delta}\right|=\sqrt{d}\varepsilon,\;\left|x_{t}-\varepsilon\boldsymbol{m}_{N^{\varepsilon,\delta}}^{\varepsilon,\delta}\right|=2\sqrt{d}\varepsilon,
\]
and $[s,t]\subset x^{-1}(T^{\varepsilon,\delta}).$ Again we have
$|x_{t}-x_{s}|\geqslant\varepsilon$ and hence $x\in A^{\varepsilon,\delta}.$ 

Now for $\varepsilon_{n}=1/n$, if we choose $\delta_{n}$ small enough
such that $\mathbb{P}\left(A^{\varepsilon_{n},\delta_{n}}\right)\leqslant\varepsilon_{n}^{2}$,
we have 
\begin{eqnarray*}
\sum_{n=1}^{\infty}\mathbb{P}\left(\left\{ x\in W:\ \sup_{0\leqslant u\leqslant1}|x_{u}^{\varepsilon_{n},\delta_{n}}-x_{u}|\geqslant11\sqrt{d}\varepsilon_{n}\right\} \right) & \leqslant & \sum_{n=1}^{\infty}\mathbb{P}\left(A^{\varepsilon_{n},\delta(\varepsilon_{n})}\right)\\
 & < & \infty,
\end{eqnarray*}
It follows from the Borel-Cantelli lemma that 
\[
\mathbb{P}\left(\limsup_{n\rightarrow\infty}\left\{ x\in W:\ \sup_{0\leqslant u\leqslant1}|x_{u}^{\varepsilon_{n},\delta_{n}}-x_{u}|\geqslant11\sqrt{d}\varepsilon_{n}\right\} \right)=0,
\]
and hence the uniform convergence (\ref{uniform convergence}) holds
for $\mathbb{P}$-almost surely.
\end{proof}

\begin{rem}
From the previous proof, it is not hard to see that the result of
Proposition \ref{prop:approximation} holds for all continuous stochastic
processes starting at the origin whose law satisfies Assumption (B).
\end{rem}

From now on, we will always assume that $\varepsilon_{n}=1/n,$ and
take $\delta_{n}$ as in the previous proof.

\subsection{A Variant of the Fréchet Distance on Path Space}

Now we are coming to the last step of the proof of Theorem \ref{main thm general framework}. 

Under Assumption (A), (B), (C), what we've obtained so far is that
there exists some $\mathbb{P}$-null set $\mathcal{N},$ such that
for any path $x\in\mathcal{N}^{c},$ the signature $S(x)_{0,1}$ is
well-defined, and for each $n\geqslant1,$ we can recover the ordered
sequence of open cubes $H_{z}^{\varepsilon_{n},\delta_{n}}$ visited
by $x$ from its signature. Moreover, the polygonal approximation
$x^{\varepsilon_{n},\delta_{n}}$ constructed before converges to
$x$ uniformly. 

By possibly enlarging the $\mathbb{P}$-null set $\mathcal{N}$ (still
a $\mathbb{P}$-null set), we are going to show that for any two paths
$x,x'\in\mathcal{N}^{c},$ if $S(x)_{0,1}=S(x')_{0,1}$, then $x$
and $x'$ defer by a reparametrization $\sigma\in\mathcal{R}$ in
the sense of Definition \ref{reparametrization}.

Now we introduce an equivalence relation ``$\thicksim$'' on $W$
by
\[
x\thicksim x'\iff\left(x_{t}\right)_{0\leqslant t\leqslant1}=\left(x'_{\sigma(t)}\right)_{0\leqslant t\leqslant1},\ \mbox{for some \ensuremath{\sigma\in\mathcal{R}}.}
\]
Let $W/_{\thicksim}$ be the quotient space consisting of $\thicksim$-equivalence
classes. For any $[x],[x']\in W/_{\sim},$ define 
\begin{equation}
d\left([x],[x']\right)=\inf_{\sigma\in\mathcal{R}}\sup_{t\in[0,1]}\left|x_{t}-x'_{\sigma(t)}\right|.\label{metric}
\end{equation}
If we only assume that $\sigma$ is non-decreasing, the function $d(\cdot,\cdot)$
is usually know as the Fréchet distance. It was originally introduced
by Fr$\acute{\mathrm{e}}$chet to study the shape of geometric spaces.
Here we emphasize that $\sigma$ is strictly increasing.

It is easy to see that $d(\cdot,\cdot)$ does not depend on the choice
of representatives in the corresponding equivalence classes, and $d(\cdot,\cdot)$
is nonnegative and symmetric. Moreover, $d(\cdot,\cdot)$ satisfies
the triangle inequality. In fact, for any $x,x',x''\in W$ and $\sigma,\theta\in\mathcal{R},$
we have 
\[
\sup_{t\in[0,1]}\left|x_{t}-x''_{\sigma(t)}\right|\leqslant\sup_{t\in[0,1]}\left|x_{t}-x'_{\theta(t)}\right|+\sup_{t\in[0,1]}\left|x'_{\theta(t)}-x''_{\sigma(t)}\right|.
\]
It follows that 
\begin{eqnarray*}
d\left([x],[x'']\right) & = & \inf_{\sigma\in\mathcal{R}}\sup_{t\in[0,1]}\left|x_{t}-x''_{\sigma(t)}\right|\\
 & \leqslant & \sup_{t\in[0,1]}\left|x_{t}-x'_{\theta(t)}\right|+\inf_{\sigma\in\mathcal{R}}\sup_{t\in[0,1]}\left|x'_{\theta(t)}-x''_{\sigma(t)}\right|\\
 & = & \sup_{t\in[0,1]}\left|x_{t}-x'_{\theta(t)}\right|+d\left([x'],[x'']\right).
\end{eqnarray*}
By taking infimum over $\theta\in\mathcal{R}$, we obtain the triangle
inequality.

It should be pointed out that unlike the Fréchet distance, $d(\cdot,\cdot)$
is not a metric on $W/_{\thicksim}.$ For example, consider the case
of $d=1.$ Let $x_{t}=t,\ t\in[0,1]$, and 
\[
x'_{t}=\begin{cases}
2t, & t\in[0,\frac{1}{2}];\\
1, & t\in[\frac{1}{2},1].
\end{cases}
\]
Then it is easy to see that $d\left([x],[x']\right)=0$, but obviously
$x'$ is not a reparametrization of $x$ in the sense of Definition
\ref{reparametrization}. However, if we exclude paths with certain
degeneracy, then on the corresponding quotient space $d(\cdot,\cdot)$
is indeed a metric. 

Let $D$ be the set of paths $x\in W$ such that there exist some
$0\leqslant s<t\leqslant1$ with 
\[
x_{u}=x_{s},\ \forall u\in[s,t].
\]
We first make an important remark that under Assumption (C), $D$
is a $\mathbb{P}$-null set. To see this, let $\{H_{n}\}_{n\geqslant1}$
be a covering of $\mathbb{R}^{d}$ consisting of open cubes, and for
each $n$ let $\phi_{n}$ be the differential one form associated
with $H_{n}$ according to Assumption $(C).$ It follows that 
\[
D\subset\bigcup_{r_{1},r_{2}\in\mathbb{Q}\cap[0,1]}\bigcup_{n\geqslant1}\left(\left\{ x\in W:\ \int_{r_{1}}^{r_{2}}\phi_{n}(dx_{u})=0\right\} \bigcap A_{r_{1},r_{2}}^{H_{n}}\right\} .
\]
Therefore, by Assumption (C) we know that $\mathbb{P}(D)=0.$ 

Now we have the following result.
\begin{prop}
\label{d is a metric}Define the equivalence relation ``$\thicksim$''
on $W_{0}=D^{c}\subset W$ as before, and let $W_{0}/_{\thicksim}$
be the corresponding quotient space. Then $d(\cdot,\cdot)$, defined
in the same way as in (\ref{metric}), is a metric on $W_{0}/_{\thicksim}.$\end{prop}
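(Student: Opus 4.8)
The plan is to verify the three metric axioms on $W_0/_{\thicksim}$, of which symmetry and the triangle inequality have already been checked (and survive the restriction to $W_0$), so the entire content is to show that $d([x],[x'])=0$ implies $[x]=[x']$ when $x,x'\in W_0$. First I would fix $x,x'\in W_0$ with $d([x],[x'])=0$ and pick a sequence $\sigma_n\in\mathcal{R}$ with $\sup_{t}|x_t-x'_{\sigma_n(t)}|\to 0$. The standard move is to extract a subsequential limit of $\{\sigma_n\}$: by Helly's selection theorem the non-decreasing functions $\sigma_n$ have a subsequence converging pointwise to some non-decreasing $\sigma:[0,1]\to[0,1]$ with $\sigma(0)=0$, $\sigma(1)=1$. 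Passing to the limit in $\sup_t|x_t-x'_{\sigma_n(t)}|\to 0$ along continuity points of $\sigma$ (and using uniform continuity of $x'$) gives $x_t=x'_{\sigma(t)}$ for all $t$ in a dense set, hence for all $t\in[0,1]$ by continuity of both sides. So $x$ and $x'$ are related by a non-decreasing surjection $\sigma$; what remains is to upgrade $\sigma$ to an element of $\mathcal{R}$, i.e. to show it may be taken continuous and strictly increasing.

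The key point — and this is exactly where the hypothesis $x,x'\in W_0=D^c$ enters — is the following: if $x_t=x'_{\sigma(t)}$ for all $t$ with $\sigma$ non-decreasing surjective, then a jump of $\sigma$ at some $t_0$ (i.e. $\sigma(t_0^-)<\sigma(t_0^+)$) forces $x'$ to be constant on the interval $[\sigma(t_0^-),\sigma(t_0^+)]$, contradicting $x'\notin D$; and a non-trivial flat piece of $\sigma$ on some $[a,b]$ (i.e. $\sigma$ constant $\equiv c$ there) forces $x_t=x'_c$ for all $t\in[a,b]$, contradicting $x\notin D$. I would carry out precisely this dichotomy: since $x\notin D$, $\sigma$ has no non-trivial constancy interval, so $\sigma$ is strictly increasing; since $x'\notin D$, $\sigma$ has no jumps, so $\sigma$ is continuous. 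A strictly increasing continuous surjection $[0,1]\to[0,1]$ fixing the endpoints is exactly a reparametrization, so $\sigma\in\mathcal{R}$ and $[x]=[x']$. A small subtlety to address carefully: $\sigma$ obtained from Helly is a priori only defined as a pointwise limit and may need adjusting at its (at most countably many) discontinuity points before the above arguments apply; but one shows those discontinuities cannot occur in the first place, so no adjustment is needed — alternatively one first replaces $\sigma$ by its right-continuous version and argues with that.

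The main obstacle I expect is the limiting argument producing $\sigma$ itself: one must be careful that the pointwise (not uniform) convergence $\sigma_n\to\sigma$ is enough to pass to the limit inside $\sup_t|x_t-x'_{\sigma_n(t)}|$. The clean way is to argue pointwise in $t$: for each fixed $t$, $|x_t-x'_{\sigma_n(t)}|\to 0$, and if $t$ is a continuity point of $\sigma$ then $\sigma_n(t)\to\sigma(t)$ so by continuity of $x'$, $x'_{\sigma_n(t)}\to x'_{\sigma(t)}$, giving $x_t=x'_{\sigma(t)}$; this holds off a countable set, and both $t\mapsto x_t$ and $t\mapsto x'_{\sigma(t)}$ are... well, $x'\circ\sigma$ need not be continuous yet, but once we have shown $\sigma$ is continuous the identity extends, or more directly we note $x_t=x'_{\sigma(t^+)}$ and $x_t=x'_{\sigma(t^-)}$ both hold by left/right approximation, which already forces $x'$ constant on $[\sigma(t^-),\sigma(t^+)]$ and hence (since $x'\notin D$) $\sigma(t^-)=\sigma(t^+)$, i.e. continuity of $\sigma$, in one stroke. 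I would organize the write-up so that this Helly-plus-dichotomy argument is the core, with the metric axioms and the $\mathcal{R}$-membership check relegated to a couple of lines each.
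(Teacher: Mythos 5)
Your overall strategy is the same as the paper's: extract a pointwise subsequential limit $\sigma$ of the $\sigma_n$ (the paper uses a diagonal argument over the rationals and then takes a one-sided regularisation, which amounts to your Helly extraction), pass to the limit in $|x_t-x'_{\sigma_n(t)}|\to 0$, and then rule out jumps of $\sigma$ via $x'\notin D$ and flat pieces via $x\notin D$. The flat-piece half of your dichotomy is fine, and the metric axioms are indeed routine.

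There is, however, a genuine gap in the jump half as you have written it. From $x_{t_0}=x'_{\sigma(t_0^-)}$ and $x_{t_0}=x'_{\sigma(t_0^+)}$ you conclude ``which already forces $x'$ constant on $[\sigma(t_0^-),\sigma(t_0^+)]$.'' This is a non sequitur: equality of $x'$ at the two endpoints of the jump interval does not make $x'$ constant in between (a continuous path can leave a point and return to it). The limit identity $x_t=x'_{\sigma(t)}$ gives you no information about $x'_s$ for $s$ in the gaps $(\sigma(t_0^-),\sigma(t_0))\cup(\sigma(t_0),\sigma(t_0^+))$, because those values of $s$ are simply not in the range of $\sigma$ (note also that the pointwise limit of the surjections $\sigma_n$ need not be surjective — that is exactly what a jump records). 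To show $x'_s=x_{t_0}$ for every such interior $s$ you must return to the approximating sequence: set $t_{n_k}=\sigma_{n_k}^{-1}(s)$, show $t_{n_k}\to t_0$ by squeezing between continuity points $u<t_0<u'$ (for which $\sigma_{n_k}(u)\to\sigma(u)\leqslant\sigma(t_0^-)<s<\sigma(t_0^+)\leqslant\sigma(u')$ eventually), and then pass to the limit in $|x_{t_{n_k}}-x'_s|\leqslant 1/n_k$. This is precisely the argument in the paper's proof; with it inserted, your proof closes.
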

\begin{proof}
It suffices to show that, for any $x,x'\in W_{0},$ if 
\begin{equation}
\inf_{\sigma\in\mathcal{R}}\sup_{t\in[0,1]}\left|x_{t}-x'_{\sigma(t)}\right|=0,\label{distance is zero}
\end{equation}
then 
\begin{equation}
x_{t}=x'_{\sigma(t)},\ \forall t\in[0,1],\label{equal up to reparametrization}
\end{equation}
for some $\sigma\in\mathcal{R}.$

In fact, by (\ref{distance is zero}), for any $n\geqslant1,$ there
exists $\sigma_{n}\in\mathcal{R},$ such that
\begin{equation}
\left|x_{t}-x'_{\sigma_{n}(t)}\right|\leqslant\frac{1}{n},\ \forall t\in[0,1].\label{uniform control in metric}
\end{equation}
It follows from compactness, denseness, and a standard diagonal selection
argument that we can find a subsequence $\{\sigma_{n_{k}}\}$ such
that for any $r\in\mathbb{Q}\bigcap[0,1],$
\[
\lim_{k\rightarrow\infty}\sigma_{n_{k}}(r)=:\widetilde{\sigma}(r)
\]
exists.

Now define $\sigma:\ [0,1]\rightarrow[0,1]$ by 
\[
\sigma(t)=\begin{cases}
\inf\left\{ \widetilde{\sigma}(r):\ r>t,r\in\mathbb{Q}\bigcap[0,1]\right\} , & 0\leqslant t<1;\\
1, & t=1.
\end{cases}
\]
We want to show that $\sigma\in\mathcal{R},$ and it satisfies (\ref{equal up to reparametrization}).

(1) It is easy to see that $\sigma$ is increasing. Let $0\leqslant t<1.$
For any $\varepsilon>0,$ there exists some $r>t,r\in\mathbb{Q}\bigcap[0,1],$
such that 
\[
\sigma(t)\leqslant\widetilde{\sigma}(r)<\sigma(t)+\varepsilon.
\]
Therefore, for any $t'\in(t,r),$ if we take some $r'\in\mathbb{Q}\bigcap[0,1]$
with $t'<r'<r,$ then
\[
\sigma(t)\leqslant\sigma(t')\leqslant\widetilde{\sigma}(r')\leqslant\widetilde{\sigma}(r)<\sigma(t)+\varepsilon.
\]
It follows that $\sigma$ is right continuous.

(2) $\sigma$ is also left continuous. 

In fact, assume on the contrary that for some $0<t\leqslant1,$ $\sigma(t-)\neq\sigma(t).$
Fix any $\sigma(t-)<s<\sigma(t),$ and define for $k\geqslant1$,
$t_{n_{k}}=\sigma_{n_{k}}^{-1}(s)$. It follows that for any $r>t,r\in\mathbb{Q}\bigcap[0,1],$
\[
s<\sigma(t)\leqslant\widetilde{\sigma}(r).
\]
Since $\lim_{k\rightarrow\infty}\sigma_{n_{k}}(r)=\widetilde{\sigma}(r),$
we know that when $k$ is large enough, $s<\sigma_{n_{k}}(r),$ which
is equivalent to $t_{n_{k}}<r$ for $k$ large enough. Therefore,
we have $\limsup_{k\rightarrow\infty}t_{n_{k}}\leqslant r.$ But this
is true for all $r>t,r\in\mathbb{Q}\bigcap[0,1],$ which implies that
$\limsup_{k\rightarrow\infty}t_{n_{k}}\leqslant t.$ On the other
hand, for any $r<t,r\in\mathbb{Q}\bigcap[0,1],$ we have 
\[
\widetilde{\sigma}(r)\leqslant\sigma(r)\leqslant\sigma(t-)<s,
\]
A similar argument yields that $\liminf_{k\rightarrow\infty}t_{n_{k}}\geqslant t.$
Therefore, $\lim_{k\rightarrow\infty}t_{n_{k}}$ exists and is equal
to $t.$ Now from (\ref{uniform control in metric}) we know that
\[
|x_{t_{n_{k}}}-x'_{s}|\leqslant\frac{1}{n_{k}},\ \forall k\geqslant1,
\]
and hence $x_{t}=x'_{s}.$ But this is true for all $\sigma(t-)<s<\sigma(t),$
which contradicts the fact that $x'\in W_{0}.$ Therefore, $\sigma$
is left continuous. A similar argument also shows that $\sigma(0)=0.$

(3) For any $r\in\mathbb{Q}\bigcap[0,1],$ $\sigma(r)=\widetilde{\sigma}(r).$ 

In fact, it is obvious that $\sigma(r)\geqslant\widetilde{\sigma}(r).$
On the other hand, for any $t<r$ we have $\sigma(t)\leqslant\widetilde{\sigma}(r),$
and by the left continuity of $\sigma$ we have $\sigma(r)\leqslant\widetilde{\sigma}(r).$

(4) $\sigma$ is strictly increasing. 

In fact, if for some $0\leqslant s<t\leqslant1,$ $\sigma(s)=\sigma(t),$
then $\sigma$ remains constant over $[s,t].$ In particular, for
any $r\in\mathbb{Q}\bigcap[s,t],$ from (\ref{uniform control in metric})
and the previous step we have
\[
x_{r}=x'_{\widetilde{\sigma}(r)}=x'_{\sigma(r)}=x'_{\sigma(s)},
\]
which implies that $x$ is constant over $[s,t],$ contradicting the
fact that $x\in W_{0}.$

Now it is obvious that $\sigma\in\mathcal{R},$ and ( \ref{equal up to reparametrization}
) follows. 
\end{proof}

From now on, we shall include $D$ to the $\mathbb{P}$-null set $\mathcal{N}.$

Now we are in a position to complete the proof of Theorem \ref{main thm general framework}. 

Assume that $x,x'\in\mathcal{N}^{c}$ and $S(x)_{0,1}=S(x')_{0,1}$.
For each $n\geqslant1$, let $\left(\phi_{z_{0}}^{\varepsilon_{n},\delta_{n}},\cdots,\phi_{z_{m}}^{\varepsilon_{n},\delta_{n}}\right)$
($\left(\phi_{z_{0}}^{\varepsilon_{n},\delta_{n}},\cdots,\phi_{z'_{m'}}^{\varepsilon_{n},\delta_{n}}\right)$,
respectively) be the unique maximal sequence of differential one forms
along which the extended signature of $x$ ($x'$, respectively) is
nonzero. It follows from Theorem \ref{signature determines extended signatures}
that $m=m'$ and $z_{i}=z'_{i}$ for $i=0,\ldots,m$. Moreover, by
Proposition \ref{unique nonzero} we know that 
\[
N^{\varepsilon_{n},\delta_{n}}(x)=N^{\varepsilon_{n},\delta_{n}}(x')=m,
\]
and 
\[
\boldsymbol{m}_{i}^{\varepsilon_{n},\delta_{n}}(x)=\boldsymbol{m}_{i}^{\varepsilon_{n},\delta_{n}}(x')=z_{i},\ \forall i=0,\cdots,m.
\]
It follows that in the quotient space $W/_{\sim},$ $[x^{\varepsilon_{n},\delta_{n}}]=[(x')^{\varepsilon_{n},\delta_{n}}]$,
where $x^{\varepsilon_{n},\delta_{n}}$ and $(x')^{\varepsilon_{n},\delta_{n}}$
are the polygonal approximations of $x$ and $x'$ respectively. On
the other hand, by Proposition \ref{prop:approximation} we know that
\[
x^{\varepsilon_{n},\delta_{n}}\rightarrow x,\ (x')^{\varepsilon_{n},\delta_{n}}\rightarrow x',
\]
under the uniform topology as $n\rightarrow\infty$. Therefore, by
the triangle inequality of the distance function $d(\cdot,\cdot)$
we have $d([x],[x'])=0.$ Since $D\subset\mathcal{N}$, it follows
from Proposition \ref{d is a metric} that there exists $\sigma\in\mathcal{R},$
such that (\ref{equal up to reparametrization}) holds. 

Now the proof of Theorem\ref{main thm general framework} is complete.

\section{A Fundamental Example: Gaussian Processes}

As we remarked before, Assumption (A) and (B) are natural for a large
class of stochastic processes. However, Assumption (C) is in general
difficult to verify. In this section, as a fundamental example of
Theorem \ref{main thm general framework}, we are going to show that
Assumption (A), (B), (C) hold for a class of Gaussian processes including
fractional Brownian motion with Hurst parameter $H>1/4$, the Ornstein-Uhlenbeck
process and the Brownian bridge. The main idea of verifying Assumption
(C) for Gaussian processes is to apply local regularity results for
Gaussian functionals from the Malliavin calculus, based on pathwise
integration by parts which is possible due to the regularity of sample
paths and Cameron-Martin paths.

The class of Gaussian processes we shall study in this section is
specified in the following. 

Let $\mathbb{P}$ be the law of a centered, nondegenerate, continuous
Gaussian process over $[0,1]$ starting at the origin with i.i.d components.
We assume that $\mathbb{P}$ satisfies the following conditions: there
exists $H\in\left(\frac{1}{4},1\right)$ such that

(G1) for all $\rho\in\left(\frac{1}{2H}\vee1,2\right]$, the $\rho$-variation
of the covariance function (see \cite{FV10}, Definition 5. 50) of
each component of $X$ is controlled by a 2D Hölder-dominated control
(see \cite{FV10}, Definition 5.51);

(G2) there exists $\delta>0$ and $c_{\delta}>0$, such that for all
$0\leqslant s<t\leqslant1$ with $\left|t-s\right|\leqslant\delta$,
we have
\[
\mathbb{E}\left[(X_{t}-X_{s})^{2}\right]\geqslant c_{\delta}(t-s)^{2H};
\]

(G3) the Cameron-Martin space $\mathcal{H}$ associated with $\mathbb{P}$
satisfies the property that 
\[
C_{0}^{1+H-}([0,1];\mathbb{R}^{d})\subset\mathcal{H}\subset C_{0}^{q-var}([0,1];\mathbb{R}^{d}),\ \forall q>\left(H+\frac{1}{2}\right)^{-1},
\]
where $C_{0}^{1+H^{-}}([0,1];\mathbb{R}^{d})$ is the space of differentiable
paths in $W$ with Hölder continuous derivatives of any order smaller
than $H$, and $C_{0}^{q-var}([0,1];\mathbb{R}^{d})$ is the space
of paths in $W$ with finite total $q$-variation.

Now we are going to prove our second main result, namely Theorem \ref{main thm fBM}.
Note that in this case the verification of Assumption (A) is a standard
result for Gaussian rough paths according to (G1) (see \cite{FV10},
Theorem 15. 33), and Assumption (B) is trivial. The main difficulty
is the verification of Assumption (C).

For any open cube $H_{x_{0},\eta}$ with center $x_{0}=(x_{0}^{1},\cdots,x_{0}^{d})\in\mathbb{R}^{d}$
and edges of length $2\eta$, we are going to construct a differential
one form $\phi$ supported on the closure of $H_{x_{0},\eta}$, such
that for any $0\leqslant s<t\leqslant1,$ 
\begin{equation}
\mathbb{P}\left(\left\{ x\in W:\ \int_{s}^{t}\phi(dx_{u})=0\right\} \cap A_{s,t}^{H_{x_{0},\eta}}\right)=0,\label{a.s. nonzero for fBM}
\end{equation}
where $A_{s,t}^{H_{x_{0},\eta}}$ is the set defined by (\ref{travel through the interior}).
In other words, Assumption (C) holds.

Let $h(t)\in C_{c}^{\infty}(\mathbb{R}^{1})$ be a function such that
\[
\begin{cases}
h(t)>0, & t\in(-1,1);\\
h(t)=0, & t\notin(-1,1),
\end{cases}
\]
and $h^{\prime}(t)$ is everywhere nonzero in $(-1,1)$ except at
$t=0$. For example, the standard mollifier function 
\[
h(t)=\begin{cases}
e^{\frac{-1}{1-\left|t\right|^{2}}}, & t\in(-1,1);\\
0, & t\notin(-1,1),
\end{cases}
\]
will satisfy the properties.

Define a differential one form $\phi(x)=\sum_{i=1}^{d}\phi_{i}(x)dx^{i}$
on $\mathbb{R}^{d}$ by 
\begin{eqnarray}
\phi_{1}(x) & = & h\left(\frac{x^{1}-x_{0}^{1}}{\eta}\right)\cdots h\left(\frac{x^{d}-x_{0}^{d}}{\eta}\right)\exp\left(h^{2}\left(\frac{x^{2}-x_{0}^{2}}{\eta}\right)\right),\ x\in\mathbb{R}^{d},\nonumber \\
\phi_{i} & = & 0,\ \mbox{for all \ensuremath{i=2,\cdots,d}.}\label{construction of phi}
\end{eqnarray}
It is easy to see that the support of $\phi$ is exactly the boundary
of the $H_{x_{0},\eta}$. Moreover, we have
\begin{eqnarray*}
\frac{\partial\phi_{1}}{\partial x^{2}}(x) & = & \frac{1}{\eta}\left(\prod_{i\neq2}h\left(\frac{x^{i}-x_{0}^{i}}{\eta}\right)\right)h'\left(\frac{x^{2}-x_{0}^{2}}{\eta}\right)\\
 &  & \cdot\exp\left(h^{2}\left(\frac{x^{2}-x_{0}^{2}}{\eta}\right)\right)\left(1+2h^{2}\left(\frac{x^{2}-x_{0}^{2}}{\eta}\right)\right),
\end{eqnarray*}
which is everywhere nonzero in $H_{x_{0},\eta}$ except on the slice
$\left\{ x\in H_{x_{0},\eta}:\ x^{2}=x_{0}^{2}\right\} $.

To verify Assumption (C) for such differential one form $\phi$, we
need the following Lemma. 
\begin{lem}
\label{everywhere zero} Fix $0\leqslant s<t\leqslant1.$ Let $f$
be a smooth function on $\mathbb{R}^{d}$ with compact support. Then
there exists a $\mathbb{P}$-null set $\mathcal{N}_{1}$ such that
for any $x\in\left(\mathcal{N}_{1}\right)^{c},$  if $\int_{u}^{v}f(x_{r})dx_{r}^{1}=0$
for all $u,v$ with $\left[u,v\right]\subset\left[s,t\right]$, then
$f(x_{u})=0$ for all $u\in[s,t]$.\end{lem}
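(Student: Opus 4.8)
The plan is to argue by contradiction: suppose that $\int_u^v f(x_r)\,dx_r^1 = 0$ for all $[u,v]\subset[s,t]$, yet $f(x_{u_0})\neq 0$ for some $u_0\in[s,t]$. The first step is purely deterministic. If the integral vanishes over every subinterval, then the indefinite integral $v\mapsto \int_s^v f(x_r)\,dx_r^1$ is identically zero on $[s,t]$. By continuity of $f$ and of the sample path $x$, the function $r\mapsto f(x_r)$ is continuous; combined with $f(x_{u_0})\neq 0$, it is bounded away from zero on some subinterval $[a,b]\subset[s,t]$ with $u_0\in[a,b]$. On $[a,b]$ we then have $\int_a^v f(x_r)\,dx_r^1 = 0$ for all $v\in[a,b]$, and since $r\mapsto f(x_r)$ is continuous and nonvanishing there, this forces $x^1$ to be constant on $[a,b]$ — this is a Young/rough-integral version of the fact that if $g\,dx^1$ integrates to zero over all subintervals and $g$ is continuous and nonzero, then $x^1$ cannot move. (One can make this precise by noting that the rough integral $\int g\,dx^1$ agrees with the Young integral when $g(x_\cdot)$ has enough regularity, and a continuous integrand that never vanishes on $[a,b]$ cannot produce a constantly-zero indefinite integral unless the driver is constant.)

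So the bad event is contained in the event that some coordinate-$1$ increment of $x$ is constant on a nondegenerate subinterval of $[s,t]$. The natural null set is therefore
\[
\mathcal{N}_1 := \bigcup_{\substack{a<b,\ a,b\in\mathbb{Q}\cap[s,t]}}\left\{x\in W:\ x^1_u = x^1_a\ \text{for all }u\in[a,b]\right\}.
\]
Here I would rationalize the endpoints $a,b$ using continuity of $x^1$, exactly as in the construction of the null set $D$ earlier in the paper. The key step is then to show $\mathbb{P}(\mathcal{N}_1)=0$, which reduces to showing that for fixed rationals $a<b$, almost surely $x^1$ is not constant on $[a,b]$. This follows immediately from the nondegeneracy hypothesis: under (G2), for $a<b$ with $b-a$ small we have $\mathbb{E}[(X^1_b - X^1_a)^2]\geq c_\delta (b-a)^{2H}>0$, so $X^1_b - X^1_a$ is a nondegenerate Gaussian and hence nonzero almost surely; for $b-a$ not small one subdivides $[a,b]$ into finitely many pieces of length $\leq\delta$. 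Since $\mathcal{N}_1$ is a countable union of such null sets, $\mathbb{P}(\mathcal{N}_1)=0$.

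I expect the main obstacle to be the first (deterministic) step: carefully justifying that a vanishing rough-path integral $\int_u^v f(x_r)\,dx_r^1 \equiv 0$ over all subintervals, with $f(x_\cdot)$ continuous and nonzero on $[a,b]$, really forces $x^1$ constant there. For a genuinely rough (e.g. $p$-variation, $p\geq 2$) driver one cannot simply "divide by $f(x_r)$", so the cleanest route is: restrict to $[a,b]$ where $f$ is bounded away from $0$, observe that $1/f$ is then smooth on a neighborhood of $x([a,b])$, and use the associativity/change-of-integrand property of rough integrals (from \cite{FV10}) to write $x^1_v - x^1_a = \int_a^v \tfrac{1}{f(x_r)}\,d\!\left(\int_a^\cdot f(x_{r'})\,dx^1_{r'}\right)_r = \int_a^v \tfrac{1}{f(x_r)}\,d(0)_r = 0$. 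This identifies the bad set with $\mathcal{N}_1$, after which the probabilistic part is routine via (G2) and Borel–Cantelli-type countable union. I would also remark that only the nondegeneracy (G2) of the Gaussian process is used here, so the lemma holds well beyond the Gaussian setting whenever one-dimensional increments are a.s. nonzero.
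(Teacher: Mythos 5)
There is a genuine gap at the heart of your deterministic step. You reduce the lemma to the claim that if $Y_v:=\int_a^v f(x_r)\,dx_r^1\equiv 0$ on $[a,b]$ and $f(x_\cdot)$ is bounded away from zero there, then $x^1$ is constant on $[a,b]$, and you justify this by writing $x^1_v-x^1_a=\int_a^v \tfrac{1}{f(x_r)}\,dY_r=\int_a^v \tfrac{1}{f(x_r)}\,d(0)_r=0$. The first equality (associativity of rough integration) is fine for geometric rough paths, but the second is not: a rough integral against $Y$ is \emph{not} a function of the first-level path $Y$ alone; it depends on the higher-level components of the joint lift of $(x,Y)$. Knowing $Y_{u,v}=0$ for all $u,v$ only kills the symmetric part of the cross second-level terms (by the shuffle identity $\int (x^j-x^j_u)\,dY+\int (Y-Y_u)\,dx^j=x^j_{u,v}Y_{u,v}=0$); the antisymmetric area terms $\int_u^v(x^j_r-x^j_u)\,dY_r=\int_u^v(x^j_r-x^j_u)f(x_r)\,dx^1_r$ have no reason to vanish, exactly as in the ``pure area'' rough path. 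So in the regime relevant here ($p\geqslant 2$, e.g.\ fBm with $H\leqslant 1/2$) the identity $\int \tfrac{1}{f}\,dY=0$ is unjustified, and the containment of the bad event in your null set $\mathcal{N}_1$ (paths with $x^1$ constant on a rational subinterval) is not established. Your argument does work in the Young regime $H>1/2$, but that misses the main cases.

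The paper's proof is structured quite differently precisely to avoid this: it uses the Euler/Taylor expansion of the rough integral, $\int_u^v f(x_r)\,dx_r^1 = f(x_u)\,x^1_{u,v}+Df(x_u)\cdot\pi_2(\mathbf{X}_{u,v})+D^2f(x_u)\cdot\pi_3(\mathbf{X}_{u,v})+O(|v-u|^{\theta})$ with $\theta>1$, and then a Borel--Cantelli argument at each rational $u$: condition (G2) gives a lower bound $|x^1_{u,u+2^{-n}}|\geqslant 2^{-\alpha n}$ eventually a.s.\ (with $\alpha$ slightly above $H$), while the second-moment estimates on $\pi_2,\pi_3$ give upper bounds $2^{-\gamma n}$ with $\gamma>\alpha$. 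Dividing by $x^1_{u,v}$ along this dyadic sequence then forces $f(x_u)=0$. Note that this uses (G2) quantitatively (the increment must not merely be nonzero but must dominate the higher-level terms at small scales), so your closing remark that only ``a.s.\ nonzero increments'' is needed is also too optimistic. If you want to salvage your route, you would need to prove that $Y\equiv 0$ forces the cross-areas to vanish as well, which is essentially as hard as the lemma itself.
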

\begin{proof}
Fix $\frac{1}{2H}<\rho<\frac{1}{H}$. According to (G1) and \cite{FV10},
Theorem 15.33, outside some $\mathbb{P}$-null set $\mathcal{N}_{0}',$
a sample path $x$ admits a canonical lifting to a geometric $2\rho$-rough
path $\mathbf{X}$ as well as a $G^{\lfloor2\rho\rfloor}(\mathbb{R}^{d})$-valued
$\frac{1}{2\rho}$-Hölder continuous path ($G^{N}(\mathbb{R}^{d})$
is the free nilpotent group of step $N$ over $\mathbb{R}^{d}$, see
\cite{FV10}, Theorem 7. 30). Since the path integral $\int_{u}^{v}f(x_{r})dx_{r}^{1}$
can be regarded as the projection of the solution to the rough differential
equation
\[
\begin{cases}
dx_{r}^{1}=dx_{r}^{1},\\
\cdots,\\
dx_{r}^{d}=dx_{r}^{d},\\
dx_{r}^{d+1}=f(x_{r}^{1},\cdots,x_{r}^{d})dx_{r}^{1}
\end{cases}
\]
over $[u,v]$ with initial condition $(x_{u}^{1},\cdots,x_{u}^{d},x_{u}^{d+1})=(x_{u}^{1},\cdots,x_{u}^{d},0),$
according to \cite{FV10}, Corollary 10.15, we know that pathwisely
\begin{align*}
 & \ \left|\int_{u}^{v}f(x_{r})dx_{r}^{1}-f(x_{u})\mathbf{X}_{u,v}^{1;1}-\sum_{i=1}^{d}\frac{\partial f}{\partial x^{i}}(x_{u})\mathbf{X}_{u,v}^{2;i,1}-\sum_{i,j=1}^{d}\frac{\partial^{2}f}{\partial x^{i}\partial x^{j}}(x_{u})\mathbf{X}_{u,v}^{3;i,j,1}\right|\\
\leqslant & \ C_{1}\|\mathbf{X}\|_{\frac{1}{2\rho}-H\ddot{o}l;[u,v]}^{2\rho\theta}|u-v|^{\theta},
\end{align*}
where $\theta>1$ and $C_{1}$ is some positive constant depending
only on $\rho,\theta$ and the uniform bounds on the derivatives of
$f$. If $\int_{u}^{v}f(x_{r})dx_{r}^{1}=0,$ then we have 
\begin{align}
 & \ \left|f(x_{u})\left(x_{v}^{1}-x_{u}^{1}\right)\right|\nonumber \\
\leqslant & \ C_{1}\|\mathbf{X}\|_{\frac{1}{2\rho}-H\ddot{o}l;[u,v]}^{2\rho\theta}|u-v|^{\theta}+\|Df\|_{\infty}\left|\pi_{2}(\mathbf{X}_{u,v})\right|+\|D^{2}f\|_{\infty}\left|\pi_{3}(\mathbf{X}_{u,v})\right|\label{pathwise estimate}
\end{align}

On the other hand, according to (G1) and \cite{FV10}, Proposition
15.19, Corollary 15.21 and Theorem 15.33, we know that 
\begin{eqnarray}
\mathbb{E}\left|\pi_{j}\left(\mathbf{X}_{u,v}\right)\right|^{2} & \leqslant & C_{2}\left|u-v\right|^{j/\rho}\label{moment estimates}
\end{eqnarray}
for each level $j$, where $C_{2}$ is some positive constant depending
only on $\rho.$ Now we choose $\alpha,\gamma$ such that $H<\alpha<\gamma<\frac{1}{\rho}$.
According to (G2) and (\ref{moment estimates}), it follows from Borel-Catelli's
lemma that
\begin{eqnarray*}
\mathcal{N}(u) & := & \left\{ x\in W:\ \left|x_{u+\frac{1}{2^{n}}}^{1}-x_{u}^{1}\right|\leqslant\frac{1}{2^{\alpha n}},\ \mathrm{for\ infinitely\ many\ }n\right\} \\
 &  & \bigcup\left\{ x\in W:\ \left|\pi_{2}\left(\mathbf{X}_{u,u+\frac{1}{2^{n}}}\right)\right|\geqslant\frac{1}{2^{\gamma n}},\ \mathrm{for\ infinitely\ many\ }n\right\} \\
 &  & \bigcup\left\{ x\in W:\ \left|\pi_{3}\left(\mathbf{X}_{u,u+\frac{1}{2^{n}}}\right)\right|\geqslant\frac{1}{2^{\gamma n}},\ \mathrm{for\ infinitely\ many\ }n\right\} 
\end{eqnarray*}
is a $\mathbb{P}$-null set.

Let $x\in\left(\mathcal{N}_{0}'\bigcup\mathcal{N}(u)\right)^{c}$.
Then there exists some $N\geqslant1,$ such that 
\[
\left|x_{u+\frac{1}{2^{n}}}^{1}-x_{u}^{1}\right|\leqslant\frac{1}{2^{\alpha n}},\;\left|\pi_{2}\left(\mathbf{X}_{u,u+\frac{1}{2^{n}}}\right)\right|\geqslant\frac{1}{2^{\gamma n}},\;\left|\pi_{3}\left(\mathbf{X}_{u,u+\frac{1}{2^{n_{k}}}}\right)\right|\geqslant\frac{1}{2^{\gamma n}},
\]
for all $n>N$. Therefore, by (\ref{pathwise estimate}), for any
$n>N$ we have 
\begin{align*}
 & \ \left|x_{v}^{1}-x_{u}^{1}\right|\\
\leqslant & \ \frac{1}{2^{n(\theta-\alpha)}}C_{1}\|\mathbf{X}\|_{\frac{1}{2\rho}-H\ddot{o}l;[0,1]}^{2\rho\theta}+\frac{1}{2^{n(\gamma-\alpha)}}\left(\|Df\|_{\infty}+\|D^{2}f\|_{\infty}\right).
\end{align*}
By taking $n\rightarrow\infty$, we have $f\left(x_{u}\right)=0$. 

Now the result follows easily if we take 
\[
\mathcal{N}_{1}=\mathcal{N}_{0}'\bigcup\bigcup_{u\in\mathbb{Q}\bigcap[s,t]}\mathcal{N}(u).
\]

\end{proof}

\begin{rem}
\label{uniform null set}By the denseness argument, it is easy to
see that the $\mathbb{P}$-null set $\mathcal{N}_{1}$ can be taken
uniformly in $s,t.$
\end{rem}
Now we are going to complete the proof of Theorem \ref{main thm fBM}.

In what follows, for simplicity we will use Einstein's summation convention:
repeated indices of superscript and subscript are automatically summed
over from $1$ to $d.$ 

Let $F(x)=\int_{s}^{t}\phi(dx_{u})=\int_{s}^{t}\phi_{i}(x_{u})dx_{u}^{i}$.
It follows that $F\in\mathbb{D}^{\infty,\infty}$ in the sense of
Malliavin. Since $F$ is a random variable on the abstract Wiener
space $(W,\mathcal{H},\mathbb{P}),$ it suffices to show that outside
a $\mathbb{P}$-null set, for any $x\in A_{s,t}^{H_{x_{0},\eta}}$
the Malliavin derivative $DF(x)$ is a nonzero element in the Cameron-Martin
space $\mathcal{H}$. It will then follow from standard local regularity
results from the Malliavin calculus (see for example \cite{Nualart},
Theorem 2.1.1 and the remark on p. 93) that the measure 
\[
\lambda(B)=\mathbb{P}\left(\left\{ F\in B\right\} \cap A_{s,t}^{H_{x_{0},\eta}}\right),\ B\in\mathcal{B}(\mathbb{R}^{1}),
\]
is absolutely continuous with respect to the Lebesgue measure in $\mathbb{R}^{1}.$
In particular, (\ref{a.s. nonzero for fBM}) holds.

Let $\mathcal{N}_{1}$ be the null set in Lemma \ref{everywhere zero}.
We know that for $\mathbb{P}$-almost surely sample paths can be lifted
as geometric $p$-rough paths for $1<p<4$ with $Hp>1,$ and according
to (G3) we have $\mathcal{H}\subset C_{0}^{q-var}([0,1];\mathbb{R}^{d})$
for any $q>\left(H+\frac{1}{2}\right)^{-1}$. Obviously we can choose
such $p,q$ so that $\frac{1}{p}+\frac{1}{q}>1.$ Therefore, in the
sense of Young's integrals we know that for any $x\in A_{s,t}^{H_{x_{0},\eta}}\cap\mathcal{N}_{1}^{c}$
and $h\in\mathcal{H}$, 
\begin{eqnarray*}
\langle DF(x),h\rangle_{\mathcal{H}} & = & \frac{d}{d\varepsilon}|_{\varepsilon=0}F(x+\varepsilon h)\\
 & = & \frac{d}{d\varepsilon}|_{\varepsilon=0}\int_{s}^{t}\phi_{i}(x_{u}+\varepsilon h_{u})d(x_{u}^{i}+\varepsilon h_{u}^{i})\\
 & = & \int_{s}^{t}\frac{\partial\phi_{i}}{\partial x^{j}}(x_{u})h_{u}^{j}dx_{u}^{i}+\int_{s}^{t}\phi_{i}(x_{u})dh_{u}^{i},
\end{eqnarray*}
where the interchange of differentiation and integration can be verified
easily by the geometric rough path nature of $ $$x$ and the continuity
of the integration map.

Integration by parts shows that 
\[
\int_{s}^{t}\phi_{i}(x_{u})dh_{u}^{i}=\phi_{i}(x_{t})h_{t}^{i}-\phi_{i}(x_{s})h_{s}^{i}-\int_{s}^{t}h_{u}^{i}\frac{\partial\phi_{i}}{\partial x^{j}}(x_{u})dx_{u}^{j}.
\]
Therefore,
\[
\langle DF(x),h\rangle_{\mathcal{H}^{H}}=(\phi_{i}(x_{t})h_{t}^{i}-\phi_{i}(x_{s})h_{s}^{i})+\int_{s}^{t}\left(\frac{\partial\phi_{i}}{\partial x^{j}}-\frac{\partial\phi_{j}}{\partial x^{i}}\right)(x_{u})h_{u}^{j}dx_{u}^{i}.
\]
Let 
\begin{equation}
Y_{u,j}=\int_{s}^{u}\left(\frac{\partial\phi_{i}}{\partial x^{j}}-\frac{\partial\phi_{j}}{\partial x^{i}}\right)(x_{v})dx_{v}^{i},\ u\in[0,1],\ j=1,\cdots,d.\label{definition of Y_u,j}
\end{equation}
It follows from integration by parts again that
\begin{eqnarray*}
\langle DF(x),h\rangle_{\mathcal{H}^{H}} & = & (\phi_{i}(x_{t})h_{t}^{i}-\phi_{i}(x_{s})h_{s}^{i})+\int_{s}^{t}h_{u}^{i}dY_{u,i}\\
 & = & (\phi_{i}(x_{t})+Y_{t,i})h_{t}^{i}-(\phi_{i}(x_{s})+Y_{s,i})h_{s}^{i}-\int_{s}^{t}Y_{u,i}dh_{u}^{i}.
\end{eqnarray*}

Now we define $h=(h^{1},\cdots,h^{d})$ by 
\begin{equation}
h_{u}^{i}=\int_{s}^{u}(\phi_{i}(x_{t})+Y_{t,i}-Y_{v,i})dv,\ u\in[0,1],\ i=1,\cdots,d,\label{construction of h}
\end{equation}
then $h_{s}^{i}=0$ for $i=1,\ldots,d$. Technically if $s>0$ we
modify $h^{i}$ smoothly on $\left[0,\frac{s}{2}\right)$ so that
$h_{0}^{i}=0$ for all $i.$ Note that the modification does not change
the value of $\langle DF(x),h\rangle_{\mathcal{H}^{H}}$ as it depends
only on the value of $h$ on $\left[s,t\right]$. By the regularity
of sample paths, it is easy to see that $h\in C_{0}^{1+H^{-}}([0,1];\mathbb{R}^{d})$,
which is also in $\mathcal{H}$ according to (G3). Therefore, 
\[
\langle DF(x),h\rangle_{\mathcal{H}}=\sum_{i=1}^{d}\int_{s}^{t}(\phi_{i}(x_{t})+Y_{t,i}-Y_{u,i})^{2}du.
\]
If $DF(x)=0,$ then$\langle DF(x),h\rangle_{\mathcal{H}}=0,$ which
implies that for all $i=1,\cdots,d,$ and $u\in[s,t]$, $\phi_{i}(x_{t})+Y_{t,i}-Y_{u,i}=0$.
It follows from taking $i=2$ and our construction of $\phi$ that
\[
\int_{u}^{v}\frac{\partial\phi_{1}}{\partial x^{2}}(x_{r})dx_{r}^{1}=0,\ \forall[u,v]\subset[s,t].
\]
Therefore, by Lemma \ref{everywhere zero} we have for all $u\in[s,t]$,
$\frac{\partial\phi_{1}}{\partial x^{2}}(x_{u})=0$. 

On the other hand, since $x\in A_{s,t}^{H_{x_{0},\eta}},$ there exists
some $u\in(s,t)$ such that $x_{u}\in H_{x_{0},\eta}.$ From the construction
of $\phi$ we've already seen that $\frac{\partial\phi_{1}}{\partial x^{2}}$
is everywhere nonzero in $H_{x_{0},\eta}$ except on the ``slice''
\[
L_{x_{0},\eta}=\{x\in H_{x_{0},\eta}:\ x^{2}=x_{0}^{2}\}.
\]
Therefore, by continuity there exists some open interval $(u,v)\subset[s,t],$
such that $x_{r}\in L_{x_{0},\eta}$ for all $r\in\left(u,v\right)$.
But this implies that there exists some $r\in\mathbb{Q}\bigcap(s,t)$
such that $x_{r}^{2}=x_{0}^{2}.$ Since for any $r\in(0,1),$ the
law of $x_{r}$ is absolutely continuous with respect to the Lebesgue
measure, we know that 
\[
\mathcal{N}_{2}:=\bigcup_{r\in Q\bigcap(0,1)}\{x_{r}^{2}=x_{0}^{2}\}
\]
is a $\mathbb{P}$-null set. By further removing $\mathcal{N}_{2}$,
we will arrive at a contradiction. Therefore, for any $x\in A_{s,t}^{H}\bigcap\mathcal{N}_{1}^{c}\bigcap\mathcal{N}_{2}^{c},$
$DF(x)$ a nonzero element in $\mathcal{H}.$

Now the proof of Theorem \ref{main thm fBM} is complete.

In the rest of this paper we will consider three specific examples
of Gaussian processes which all verify conditions (G1), (G2) and (G3):
fractional Brownian motion with Hurst parameter $H>1/4$, the Ornstein-Uhlenbeck
process and the Brownian bridge.

\subsection{Fractional Brownian Motion with Hurst Parameter $H>1/4$}

Let $X$ be the $d$-dimensional fractional Brownian motion with Hurst
parameter $H$ for $H>\frac{1}{4}.$ In other words, $X$ is a Gaussian
process starting at the origin with i.i.d. components, and the covariance
function of $X^{i}$ is given by 
\[
R^{H}(s,t)=\frac{1}{2}\left(s^{2H}+t^{2H}-|t-s|^{2H}\right),\ s,t\in[0,1].
\]

In this case the parameter $H$ in the conditions (G1), (G2) and (G3)
is just the Hurst parameter. The verification of Condition (G1) is
the content of \cite{FV10}, Proposition 15.5 if $H\in\left(\frac{1}{4},\frac{1}{2}\right]$
(the case when $H>1/2$ is trivial in the rough path setting), and
(G2) follows from direct calculation. The verification of (G3) is
contained in the following two lemmas. 

Let $\mathcal{H}^{H}$ be the Cameron-Martin space associated with
$X.$
\begin{lem}
\label{C^1 in H}$\mathcal{H}^{H}$ contains $C_{0}^{\alpha}([0,1];\mathbb{R}^{d})$
for all $\alpha>H+\frac{1}{2}$. \end{lem}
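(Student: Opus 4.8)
The plan is to deduce the inclusion from the standard description of the Cameron--Martin space $\mathcal{H}^H$ of fractional Brownian motion, combined with classical mapping properties of fractional integration on H\"older spaces; once these are in place the statement is essentially bookkeeping. Working componentwise (the coordinates of $X$ are i.i.d.), $\mathcal{H}^H$ is the image of $L^2([0,1])$ under the Volterra kernel operator $K_H$, with $\|K_H\dot h\|_{\mathcal{H}^H}=\|\dot h\|_{L^2}$; moreover $K_H$ coincides with the Riemann--Liouville fractional integral $I_{0+}^{H+1/2}$ up to multiplication by power weights that are bounded above and below on each interval $[\eta,1]$ and are power-type singular only at $0$. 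In particular, for $H\in(\tfrac14,\tfrac12]$ one has $\mathcal{H}^H=I_{0+}^{H+1/2}\big(L^2([0,1];\mathbb{R}^d)\big)$, so it suffices to prove $C_0^\alpha\subseteq I_{0+}^{H+1/2}(L^2)$ whenever $\alpha>H+\tfrac12$.

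For such $H$, set $\gamma=H+\tfrac12\in(\tfrac34,1]$ and let $f\in C_0^\alpha$ with $\alpha>\gamma$. Because $f(0)=0$, the Marchaud form of the fractional derivative,
\[
D_{0+}^{\gamma}f(t)=\frac{1}{\Gamma(1-\gamma)}\left(\frac{f(t)}{t^{\gamma}}+\gamma\int_0^t\frac{f(t)-f(t-s)}{s^{\gamma+1}}\,ds\right),
\]
is well defined: in the integral $|f(t)-f(t-s)|\leqslant Cs^{\alpha}$ with $\alpha-\gamma-1>-1$, and $|f(t)|\leqslant Ct^{\alpha}$ gives $|f(t)|t^{-\gamma}\leqslant Ct^{\alpha-\gamma}$. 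Hence $D_{0+}^{\gamma}f\in C_0^{\alpha-\gamma}([0,1];\mathbb{R}^d)\subset L^{\infty}\subset L^2$, and the fundamental theorem of fractional calculus (again using $f(0)=0$) yields $I_{0+}^{\gamma}D_{0+}^{\gamma}f=f$; thus $f\in I_{0+}^{\gamma}(L^2)=\mathcal{H}^H$ with $\|f\|_{\mathcal{H}^H}\lesssim\|D_{0+}^{\gamma}f\|_{L^2}<\infty$. For the remaining range $H\in(\tfrac12,1)$, here $\gamma\in(1,\tfrac32)$ and $\alpha>\gamma>1$, so $f\in C^1$ with $f'\in C^{\alpha-1}$; integrating once and using the analogous kernel description of $\mathcal{H}^H$ reduces matters to $D_{0+}^{\gamma-1}f'\in L^2$ with $\gamma-1=H-\tfrac12\in(0,\tfrac12)$, which follows from the same Marchaud estimate, the extra boundary term $f'(0)\,t^{-(\gamma-1)}/\Gamma(2-\gamma)$ being square-integrable near $0$ precisely because $\gamma-1<\tfrac12$ (this classical range may also be quoted directly from \cite{FV10}).

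I expect the only genuinely delicate point to be the behaviour at $t=0$: the weights relating $K_H$ to $I_{0+}^{H+1/2}$ are singular there, and it is exactly the strict gap $\alpha-(H+\tfrac12)>0$, together with $f(0)=0$, that absorbs this singularity. All the other ingredients are standard fractional calculus and the well-documented description of $\mathcal{H}^H$ (see, e.g., \cite{FV10} and the references therein). Finally, since $C_0^{1+H^-}\subseteq C_0^{\alpha}$ for a suitable $\alpha\in(H+\tfrac12,1+H)$ --- with $C_0^\alpha$ for $\alpha>1$ read as the space of $C^1$ paths in $W$ with $(\alpha-1)$-H\"older derivative --- this lemma supplies the first inclusion in condition (G3) for fractional Brownian motion.
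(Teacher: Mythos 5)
Your proposal is correct and follows essentially the same route as the paper: both rest on the Decreusefond--Ustunel identification $\mathcal{H}^{H}=\mathcal{I}_{0+}^{H+1/2}\left(L^{2}[0,1]\right)$, treat $H\leqslant\frac{1}{2}$ via the classical fact that this image contains $\alpha$-H\"older functions vanishing at $0$ with $\alpha>H+\frac{1}{2}$, and reduce the case $H>\frac{1}{2}$ to the derivative lying in $\mathcal{I}_{0+}^{H-1/2}\left(L^{2}\right)$. The only difference is that you spell out the Marchaud-derivative estimate where the paper simply cites Samko--Kilbas--Marichev.
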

\begin{proof}
We will assume $H\neq\frac{1}{2}$, as the result is well-known for
Brownian motion. According to \cite{fbm Cameron martin}, Theorem
2.1 and Theorem 3.3, we have $\mathcal{H}^{H}=\mathcal{I}_{0+}^{H+\frac{1}{2}}\left(L^{2}\left[0,1\right]\right)$,
where 
\[
\mathcal{I}_{0+}^{\alpha}\left(f\right)\left(x\right)=\int_{0}^{x}f\left(t\right)\left(x-t\right)^{\alpha-1}dt
\]
is the fractional integral operator.

If $0<H<\frac{1}{2}$, from fractional calculus (see \cite{Fractional integrals},
p. 233) we know that $\mathcal{I}_{0+}^{H+\frac{1}{2}}\left(L^{2}\left[0,1\right]\right)$
contains all $\alpha$-Hölder continuous functions whenever $\alpha>H+\frac{1}{2}$.
If $H>\frac{1}{2}$, by the fundamental theorem of calculus we know
that $h\in\mathcal{I}_{0+}^{H+\frac{1}{2}}\left(L^{2}\left[0,1\right]\right)$
if and only if $h$ is differentiable with derivative in $\mathcal{I}_{0+}^{H-\frac{1}{2}}\left(L^{2}\left[0,1\right]\right)$.
Therefore, in both cases we have $\mathcal{H}^{H}$ containing $C_{0}^{\alpha}\left(\left[0,1\right];\mathbb{R}^{d}\right)$
for all $\alpha>H+\frac{1}{2}$.
\end{proof}

\begin{lem}
(1) (see \cite{fbm Cameron martin}, Theorem 2.1, Theorem 3.3 and \cite{Fractional integrals},
Theorem 3.6) If $H>\frac{1}{2}$, we have 
\begin{equation}
\mathcal{H}^{H}\subset C_{0}^{H}([0,1];\mathbb{R}^{d}).\label{embedding when H>1/2}
\end{equation}

(2) (see \cite{FV05}, Corollary 1) If $0<H\leqslant\frac{1}{2}$,
then for any $q>\left(H+\frac{1}{2}\right)^{-1},$ we have 
\[
\mathcal{H}^{H}\subset C_{0}^{q-var}([0,1];\mathbb{R}^{d}).
\]

\end{lem}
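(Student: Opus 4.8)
The plan is to reduce both embeddings to known facts about the Cameron--Martin space of fractional Brownian motion and the mapping properties of the Riemann--Liouville fractional integral; neither part requires a genuinely new argument, and indeed both are essentially the cited results.

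For part (1) I would start, exactly as in the proof of Lemma \ref{C^1 in H}, from the identification $\mathcal{H}^{H}=\mathcal{I}_{0+}^{H+\frac{1}{2}}\bigl(L^{2}[0,1]\bigr)$ coming from \cite{fbm Cameron martin}, Theorems 2.1 and 3.3. Since $H>\frac{1}{2}$ we have $\alpha:=H+\frac{1}{2}\in(1,\frac{3}{2})$, so I would write $\mathcal{I}_{0+}^{\alpha}=\mathcal{I}_{0+}^{1}\circ\mathcal{I}_{0+}^{\alpha-1}$ with $\alpha-1\in(0,\frac{1}{2})$. The Hardy--Littlewood--Sobolev theorem for fractional integration (\cite{Fractional integrals}, Theorem 3.6) gives $\mathcal{I}_{0+}^{\alpha-1}(L^{2})\subset L^{r}$ with $\tfrac{1}{r}=\tfrac{1}{2}-(\alpha-1)=1-H$, i.e. $r=(1-H)^{-1}\in(2,\infty)$. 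Hence any $h\in\mathcal{H}^{H}$ is the antiderivative of an $L^{(1-H)^{-1}}$ function and vanishes at $0$, so by H\"older's inequality $|h_{t}-h_{s}|\leqslant\|h'\|_{L^{(1-H)^{-1}}}\,|t-s|^{H}$, which gives $h\in C_{0}^{H}([0,1];\mathbb{R}^{d})$. (Alternatively one can quote the H\"older-space version of the fractional-integral embedding directly, after checking that $\alpha-\tfrac{1}{2}=H$ has non-integer fractional part.)

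For part (2) I would simply invoke \cite{FV05}, Corollary 1, which establishes that for $0<H\leqslant\frac{1}{2}$ the Cameron--Martin paths of fractional Brownian motion have finite $q$-variation for every $q>(H+\frac{1}{2})^{-1}$; since the paths under consideration are based at the origin, this yields $\mathcal{H}^{H}\subset C_{0}^{q-var}([0,1];\mathbb{R}^{d})$. The only points requiring any care are bookkeeping ones: in part (1), verifying the hypotheses of the fractional-integral embedding for $\alpha=H+\frac{1}{2}>1$ (handled by peeling off one integer order of integration as above) and noting that elements of $\mathcal{I}_{0+}^{\alpha}(L^{2})$ automatically vanish at $0$ so that one lands in the subscript-$0$ spaces; in part (2), merely recording that Cameron--Martin paths of a process started at the origin are themselves based at the origin. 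There is no substantial obstacle.
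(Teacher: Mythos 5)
Your proof is correct and follows exactly the route the paper intends: the paper offers no argument of its own for this lemma, only the citations, and your derivation of part (1) from $\mathcal{H}^{H}=\mathcal{I}_{0+}^{H+1/2}(L^{2})$ via the fractional-integration mapping theorem (with the exponent bookkeeping $1-\frac{1}{r}=H$ checking out) together with the direct invocation of \cite{FV05} for part (2) is precisely what those citations supply.
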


\begin{rem}
From the proof of Theorem \ref{main thm fBM} we can see that the
embedding $\mathcal{H}^{H}\subset C_{0}^{q-var}([0,1];\mathbb{R}^{d})$
is only used for making sense of path integrals in the sense of Young.
Therefore, when $H>\frac{1}{2}$, (\ref{embedding when H>1/2}) will
obviously be sufficient for us to carry out all the calculations before
as we are also in the setting of Young's integrals.
\end{rem}

\subsection{The Ornstein-Uhlenbeck Process}

Let 
\[
X_{t}=\int_{0}^{t}e^{-(t-s)}dB_{s},\ t\in[0,1],
\]
be the standard Ornstein-Uhlenbeck process in $\mathbb{R}^{d}$ starting
at the origin, where $B$ is the standard $d$-dimensional Brownian
motion. 

We take $H=\frac{1}{2}$. The verification of Condition (G1) is contained
in \cite{FV10}, p. 405 and (G2) follows direct calculation. (G3)
is a consequence of the fact that the Cameron-Martin space $\mathcal{H}^{\mathrm{OU}}$
associated with $X$ is the same as the one of Brownian motion with
a different but equivalent inner product (see \cite{Strook}, Theorem
8.5.4).
\begin{rem}
The uniqueness of signature for the Ornstein-Uhlenbeck process is
the direct consequence of the general result in \cite{GQ13}, as it
is the solution of a (hypo)elliptic SDE.
\end{rem}

\subsection{The Brownian Bridge}

Finally we consider the Brownian bridge 
\[
X_{t}=B_{t}-tB_{1},\ t\in[0,1].
\]

In this case we also take $H=\frac{1}{2}$. Similar to the case of
the Ornstein-Uhlenbeck process, (G1) and (G2) follows quite easily
by direct calculations. However, (G3) is not satisfied as the Cameron-Martin
space $\mathcal{H}^{\mathrm{Bridge}}$ associated with $X$ is the
one for Brownian motion with vanishing terminal condition: $h_{1}=0$
(see \cite{Strook}, p. 334--335). Of course the embedding $\mathcal{H}^{\mathrm{Bridge}}\subset C^{q-var}([0,1];\mathbb{R}^{d})$
still holds for any $q>1.$ 

The main trouble in the verification of Assumption (C) is that in
the explicit construction of our Cameron-Martin path, the $h$ given
by (\ref{construction of h}) may not satisfy $h_{1}=0.$ However,
it is just a technical issue to overcome such difficulty.

Recall that we want to show $DF(x)\neq0$ for $x\in A_{s,t}^{H_{x_{0},\eta}},$
where $F=\int_{s}^{t}\phi(dx_{u})$ and $\phi$ is the differential
one form given by (\ref{construction of phi}). From our proof before
it is easy to see that everything follows in the same way if $t<1,$
since we can always modify $h^{i}$ on $\left(\frac{t+1}{2},1\right]$
so that $h_{1}^{i}=0$ and the value of $\langle DF(x),h\rangle$
will not change as it depends only on the value of $h$ on $[s,t].$
Therefore, we only need to consider the case when $t=1.$ 

On the path space $W$ let $x\in A_{s,t}^{H_{z}^{\varepsilon,\delta}}$
and take $\varepsilon>0$ such that $x|_{[1-\varepsilon,1]}\subset H_{0}^{\varepsilon,\delta}$
(this is possible since $x_{1}=0$). Define $\phi$ by (\ref{construction of phi})
for the open cube $H_{z}^{\varepsilon,\delta},$ and define $Y_{u,j}$
by (\ref{definition of Y_u,j}). Now we need to consider two cases.

(1) If $z\neq0,$ then 
\[
\phi_{i}(x_{1})+Y_{1,i}-Y_{v,i}=0,\ \forall v\in[1-\varepsilon,1],
\]
since $\phi$ is supported on the closure of $H_{z}^{\varepsilon,\delta}.$
Therefore, for any $h\in\mathcal{H},$ 
\[
\langle DF(x),h\rangle=\int_{s}^{1-\varepsilon}\left(\phi_{i}(x_{1})+Y_{1,i}-Y_{v,i}\right)dh_{u}^{i}.
\]
To apply our previous argument, we just define $h$ by (\ref{construction of h})
but modified on $\left(1-\frac{\varepsilon}{2},1\right]$ so that
$h_{1}^{i}=0,$ and the resulting $h$ will be an element in $\mathcal{H}^{\mathrm{Bridge}}$.
By making use of Remark \ref{uniform null set}, the proof follows
easily in the same way.

(2) If $ $ $z=0,$ based on our argument before, for any $\psi^{i}\in C^{1}([1-\varepsilon,1])$
($i=1,\cdots,d$) with 
\[
\psi_{1-\varepsilon}^{i}=C_{i}:=\int_{s}^{1-\varepsilon}\left(\phi_{i}(x_{1})+Y_{1,i}-Y_{v,i}\right)dv
\]
and $\psi_{1}^{i}=0,$ the function 
\begin{equation}
h_{u}^{i}=\begin{cases}
\int_{s}^{u}\left(\phi_{i}(x_{1})+Y_{1,i}-Y_{v,i}\right)dv, & u\in[0,1-\varepsilon];\\
\psi_{u}^{i}, & u\in[1-\varepsilon,1],
\end{cases}\label{construction of h for bridge}
\end{equation}
defines an element $h\in\mathcal{H}^{\mathrm{Bridge}}$. It follows
that 
\begin{eqnarray*}
\langle DF(x),h\rangle & = & \sum_{i=1}^{d}\int_{s}^{1-\varepsilon}\left(\phi_{i}(x_{1})+Y_{1,i}-Y_{v,i}\right)^{2}dv\\
 &  & +\sum_{i=1}^{d}\int_{1-\varepsilon}^{1}\left(\phi_{i}(x_{1})+Y_{1,i}-Y_{v,i}\right)d\psi_{v}^{i}.
\end{eqnarray*}
Now we take $\psi^{i}$ of the form
\[
\psi_{u}^{i}=C_{i}-\int_{1-\varepsilon}^{u}\xi_{v}^{i}dv,\ u\in[1-\varepsilon,1],
\]
where $\xi^{i}\in C([1-\varepsilon,1])$ with $\int_{1-\varepsilon}^{1}\xi_{v}^{i}dv=C_{i}.$
If $\langle DF(x),h\rangle=0,$ then we have 
\[
\sum_{i=1}^{d}\int_{s}^{1-\varepsilon}\left(\phi_{i}(x_{1})+Y_{1,i}-Y_{v,i}\right)^{2}dv-\sum_{i=1}^{d}\int_{1-\varepsilon}^{1}\left(\phi_{i}(x_{1})+Y_{1,i}-Y_{v,i}\right)\xi_{v}^{i}dv=0.
\]
It follows that for any $\zeta^{i}\in C([1-\varepsilon],1)$ with
$\int_{1-\varepsilon}^{1}\zeta_{v}^{i}dv=0,$ we have 
\[
\sum_{i=1}^{d}\int_{1-\varepsilon}^{1}\left(\phi_{i}(x_{1})+Y_{1,i}-Y_{v,i}\right)\zeta_{v}^{i}dv=0,
\]
which by an elementary argument implies that 
\[
\phi_{i}(x_{1})+Y_{1,i}-Y_{v,i}=\mathrm{const.},\ \forall v\in[1-\varepsilon,1]\ \mathrm{and}\ 1\leqslant i\leqslant d.
\]
Now the proof follows again by making use of Remark \ref{uniform null set}
and the fact that $x|_{[1-\varepsilon,1]}\subset H_{0}^{\varepsilon,\delta}$.

\begin{rem}
By the same argument with a technical modification of $\psi$ so that
the $h$ defined by (\ref{construction of h for bridge}) is regular
enough to lie in the Cameron-Martin space, the result holds for general
Gaussian bridge processes 
\[
X_{t}=G_{t}-tG_{1},\ t\in[0,1],
\]
as long as the underlying Gaussian process $G$ itself satisfies conditions
(G1), (G2) and (G3).
\end{rem}

\section*{Acknowledgement}
The authors wish to thank Dr. Zhongmin Qian for his valuable suggestions on this work. The authors are supported by the Oxford-Man Institute at University of Oxford and by ERC (Grant Agreement No.291244 Esig).


\begin{thebibliography}{10}
\bibitem{BBL02}T. Bagby, L. Bos and N. Levenberg, Multivariate simultaneous
approximation, \textit{Constr. Approx.} 18 (3), 569--577, 2002. 

\bibitem{BG13}H. Boedihardjo and X. Geng, On simple piecewise geodesic
interpolation of simple and Jordan curves, \textit{arXiv}:1309.1576.

\bibitem{BGLY14}H. Boedihardjo and X. Geng, T. Lyons and D. Yang,
the signature of a rough path: uniqueness, \textit{arXiv}:1406.7871.

\bibitem{BNQ12}H. Boedihardjo, H. Ni and Z. Qian, Uniqueness of signature
for simple curves,\textit{ arXiv}:1304.0755. 

\bibitem{CMLT12}C. Cass, M. Hairer, C. Litterer and S. Tindel, Smoothness
of the density for solutions to Gaussian Rough Differential Equations,
\textit{arXiv}:1209.3100. 

\bibitem{Chen54}K. Chen, Iterated integrals and exponential homomorphisms,
\textit{Proc. London Math. Soc.} 4 (3), 502--512, 1954.

\bibitem{Chen uniqueness}K. Chen, Integration of paths-a faithful
representation of paths by non-commutative formal power series, \textit{Trans.
Amer. Math. Soc. }89, 395--407, 1958. 

\bibitem{CQ00}L. Coutin and Z. Qian, Stochastic differential equations
for fractional Brownian motions, \textit{C. R.Acad. Sci. Paris Ser.
I Math.} 331, 75--80, 2000.


\bibitem{fbm Cameron martin}L. Decreusefond and A. Ustunel, Stochastic
Analysis of the Fractional Brownian Motion, \textit{Potential Analysis}
10, 177--214, 1997.

\bibitem{FV05}P. Friz and N. Victoir, A variation embedding theorem
and applications, \textit{J. Funct. Anal.} 239, 631--637, 2006.

\bibitem{FV06}P. Friz and N. Victoir, A note on the notion of geometric
rough paths, \textit{Probab. Theory Relat. Fields}, 136, 395--416,
2006. 

\bibitem{FV10}P. Friz and N. Victoir, \textit{Multidimensional stochastic
processes as rough paths}, Cambridge Studies of Advanced Mathematics,
Vol. 120, Cambridge University Press, 2010.

\bibitem{GQ13}X. Geng and Z. Qian, On the Uniqueness of Stratonovich\textquoteright{}s
Signatures of Multidimensional Diffusion Paths, \textit{arXiv}:1304.6985. 

\bibitem{Gub10}M. Gubinelli, Ramifi{}cation of rough paths,\textit{
J. Diff{}erential Equations}, 248 (4), 693-721, 2010.

\bibitem{HK13}M. Hairer and D. Kelly, Geometric versus non-geometric
rough paths, arXiv:1210.6294. 

\bibitem{tree like}B. Hambly and T. Lyons, Uniqueness for the signature
of a path of bounded variation and the reduced path group, \textit{Ann.
of Math}., 171 (1), 109--167, 2010.

\bibitem{LQ12}Y. Le Jan and Z. Qian, Stratonovich\textquoteright{}s
signatures of Brownian motion determine Brownian sample paths, \textit{Probab.
Theory Relat. Fields}, 157, 440--454, 2012.

\bibitem{LLN13}D. Levin and T. Lyons, H. Ni, Learning from the past,
predicting the statistics for the future, learning an evolving system,
arXiv:1309.0260.

\bibitem{MR1654527}T. Lyons, Differential equations driven by rough
signals, \textit{Rev. Mat. Iberoamericana} 14 (2), 215--310, 1998.

\bibitem{LCL07}T. Lyons, M. Caruana and T. Lévy, \textit{Differential
equations driven by rough paths}, Springer, 2007.

\bibitem{LyonsQ}T. Lyons and Z. Qian, \textit{System control and
rough path}s, Oxford Mathematical Monographs, Oxford University Press,
2002.

\bibitem{LX14}T. Lyons and W. Xu, Inverting the signature of a path,
\textit{arXiv}:1406.7833.

\bibitem{Nualart}D. Nualart, \textit{The Malliavin calculus and related
topics, }Probability and Its Applications, 2nd Edition, Springer-Verlag,
2006.

\bibitem{Fractional integrals}S. Samko and A. Kilbas, O Marichev,
\textit{Fractional integrals and derivatives: theory and applications},
Gordon and Breach, Amsterdam 1993. 

\bibitem{Strook}D. Stroock, \textit{Probability theory, an analytic
view}, 2nd Edition Cambridge University Press, 1993.

\bibitem{Young}L. C. Young, An inequality of Hölder type connected
with Stieltjes integration. \textit{Acta Math.}, 67, 251\textendash{}282,
1936.\end{thebibliography}
\end{document}